\documentclass[a4paper,10pt]{amsart}
\usepackage{latexsym}
\usepackage{amssymb}
\usepackage{amsfonts}
\usepackage{amsmath}
\usepackage{indentfirst}
\usepackage{graphicx}
\usepackage{epsfig}

\usepackage{epic, eepic}

\newtheorem{theorem}{Theorem}
\newtheorem{claim}[theorem]{Claim}
\newtheorem{corollary}[theorem]{Corollary}
\newtheorem{lemma}[theorem]{Lemma}
\newtheorem{proposition}[theorem]{Proposition}
\newtheorem{conjecture}[theorem]{Conjecture}
\newtheorem{question}[theorem]{Question}

\theoremstyle{definition}
\newtheorem{definition}[theorem]{Definition}
\newtheorem{remark}[theorem]{Remark}


\newcommand\clc{\mathcal{C}}
\newcommand\clp{\mathcal{P}}

\newcommand\clsn{\mathcal{S}_n}
\newcommand{\cl}[1]{\text{cl}(#1)}

\newcommand{\cC}{{\mathfrak C}}
\newcommand{\chain}[2]{\cC(#1,#2)}  
\newcommand{\tch}[3]{\cC^{#3}(#1,#2)} 
\newcommand{\tight}[1]{\,\overset{\scriptstyle #1}{\scriptstyle <}\,}
\newcommand{\cS}{{\mathcal S}}  
                                        
\newcommand{\one}{\mathbf{1}}  
\newcommand{\zero}{\emptyset}  
\newcommand{\suf}[2]{#1_{>#2}} 
\newcommand{\pre}[2]{#1_{\le#2}} 
\newcommand{\ppr}[1]{\pre{\pi}{#1}} 
\newcommand{\psu}[1]{\suf{\pi}{#1}}   
\newcommand{\spr}[1]{\pre{\sigma}{#1}} 
\newcommand{\ssu}[1]{\suf{\sigma}{#1}}
\newcommand{\bsu}{\suf{\beta}{1}} 
\newcommand{\sgn}{\text{sgn}}

\newcommand{\msp}{\ensuremath{\mu(\sigma,\pi)}}

\newcommand{\bmu}{\overline{\mu}} 

\newcommand{\cO}{{\mathcal O}} 

\parindent0mm
\parskip4pt


\title[The M\"obius function of separable and decomposable
  permutations]{The M\"obius function of separable and decomposable
  permutations}

\author{Alexander Burstein}
\address{Department of Mathematics, Howard University, Washington DC
  20059, USA}
\email{aburstein@howard.edu}
\thanks{}

\author{V\'it Jel\'inek}
\address{Fakult\"at f\"ur Mathematik, Universit\"at Wien, 
Garnisongasse 3, 1090 Wien, Austria}
\email{jelinek@kam.mff.cuni.cz}
\thanks{Jel\'inek and Steingr\'imsson were supported by grant no.\
  090038012 from the Icelandic Research Fund. Jel\'\i nek was also supported by
grant Z130-N13 from the Austrian Science Foundation (FWF)}

\author{Eva Jel\'inkov\'a}
\address{Department of Applied Mathematics, Charles University 
in Prague, Malostransk\'e n\'am. 25, 110 00, Prague, Czech Republic}
\email{eva@kam.mff.cuni.cz}
\thanks{Jel\'inkov\'a was supported by project 1M0021620838 of the Czech
Ministry of Education}

\author{Einar Steingr\'imsson}%
\address{Department\ of Computer and Information Sciences, University of
  Strathclyde, Glasgow G1 1XH, UK}
\email{einar@alum.mit.edu}%
\thanks{}

\keywords{M\"obius function, pattern poset, decomposable permutations,
  separable permutations.}

\begin{document}
\begin{abstract}
  We give a recursive formula for the M\"obius function of an interval
  $[\sigma,\pi]$ in the poset of permutations ordered by pattern
  containment in the case where $\pi$ is a decomposable permutation,
  that is, consists of two blocks where the first one contains all the
  letters $1,2,\ldots,k$ for some $k$.  This leads to many special
  cases of more explicit formulas.  It also gives rise to a 
  computationally efficient formula for the M\"obius function in the
  case where $\sigma$ and $\pi$ are separable permutations.  A
  permutation is separable if it can be generated from the permutation
  1 by successive sums and skew sums or, equivalently, if it avoids
  the patterns 2413 and 3142.  A consequence of the formula is that
  the M\"obius function of such an interval $[\sigma,\pi]$ is bounded
  by the number of occurrences of $\sigma$ as a pattern in $\pi$. We
  also show that for any separable permutation $\pi$ the M\"obius
  function of $(1,\pi)$ is either 0, 1 or $-1$.
\end{abstract}

\maketitle
\thispagestyle{empty}

\section{Introduction}

Let $\clsn$ be the set of permutations of the integers
$\{1,2,\dots,n\}$.  The union of all $\clsn$ for $n=1,2,\dots$ forms a
poset $\clp$ with respect to \emph{pattern containment}.  That is, we
define $\sigma\le \pi$ in $\clp$ if there is a subsequence of $\pi$
whose letters are in the same order of size as the letters in
$\sigma$.  For example, $132\le24153$, because 2,5,3 appear in the
same order of size as the letters in $132$.  We denote the number of
\emph{occurrences} of $\sigma$ in $\pi$ by $\sigma(\pi)$, for example
$132(24153)=3$, since 243, 253 and 153 are all the occurrences of the
pattern 132 in 24153.

A classical question to ask for any combinatorially defined poset is
what its M\"obius function is.  For our poset $\clp$ this seems to
have first been mentioned explicitly by Wilf \cite{wilf}.  The first
result in this direction was given by Sagan and Vatter
\cite{sagan-vatter}, who showed that an interval $[\sigma,\pi]$ of
\emph{layered} permutations is isomorphic to a certain poset of
compositions of an integer, and they gave a formula for the M\"obius
function in this case. A permutation is layered if it is the
concatenation of decreasing sequences, such that the letters in each
sequence are smaller than all letters in subsequent sequences.
Further results were given by Steingr\'imsson and Tenner
\cite{ste-tenner}, who showed that the M\"obius function
$\mu(\sigma,\pi)$ is 0 whenever the complement of the occurrences of
$\sigma$ in $\pi$ contains an interval block, that is, when $\pi$ has
a segment of two or more consecutive letters that form a segment of
values, where none of these consecutive letters belongs to any
occurrence of $\sigma$ in $\pi$.  One example of such a pair is
$(132,598342617)$, where the letters $342$ do not belong to any
occurrence of $132$ in $598342617$.  Steingr\'imsson and Tenner
\cite{ste-tenner} also described certain intervals where the M\"obius
function is either 1 or $-1$.

In this paper, we focus on permutations that can be expressed as
direct sums or skew sums of smaller permutations. A \emph{direct sum}
of two permutations $\alpha$ and $\beta$, denoted by $\alpha+\beta$,
is the concatenation $\alpha\beta'$, where $\beta'$ is obtained by
incrementing each element of $\beta$ by $|\alpha|$. For example,
31426587 can be written as a direct sum 3142+2143. Similarly, a
\emph{skew sum} $\alpha*\beta$ is the concatenation $\alpha'\beta$
where $\alpha'$ is obtained by incrementing $\alpha$ by $|\beta|$.

A permutation that can be written as a direct sum of two
nonempty permutations is \emph{decomposable}. \emph{The decomposition} of
a permutation $\pi$ is an expression $\pi=\pi_1+\pi_2+\dotsb+\pi_k$ in which
each summand $\pi_i$ is indecomposable. A permutation is \emph{separable} if it
can be obtained from the singleton permutation $\one$ by iterating direct sums
and skew sums (for an alternative definition see Section~\ref{sec-defs}).

Our main result is a set of recurrences for computing the M\"obius
function $\msp$ when $\pi$ is decomposable. If $\pi_1+\dotsb+\pi_k$ is
the decomposition of $\pi$, then these recurrences express $\msp$ in
terms of M\"obius functions involving the summands~$\pi_i$.

In the special case when $\pi$ is separable, these recurrences provide
a polynomial-time algorithm to compute $\mu(\sigma,\pi)$.  These
recurrences also allow us to obtain an alternative combinatorial
interpretation of the M\"obius function of separable permutations,
based on the concept of `normal embeddings'. This interpretation of
$\mu$ generalizes previous results of Sagan and
Vatter~\cite{sagan-vatter} for layered permutations.

Using these expressions of the M\"obius function in terms of normal
embeddings, we derive several bounds on the values of
$\mu(\sigma,\pi)$ for $\sigma$ and $\pi$ separable. In
\cite{ste-tenner}, Steingr\'imsson and Tenner conjectured that for
permutations $\sigma$ and $\pi$ avoiding the pattern 132 (or any one
of the patterns 213, 231, 312) the absolute value of the M\"obius
function of the interval $[\sigma,\pi]$ is bounded by the number of
occurrences of $\sigma$ in $\pi$. We prove this conjecture for the
more general class of separable permutations (for arbitrary $\sigma$
and $\pi$ this bound does not hold in general). In particular, if
$\pi$ has a single occurrence of $\sigma$ then $\mu(\sigma,\pi)$ is
either 1, 0 or $-1$.  We also prove a generalization of another
conjecture mentioned in \cite{ste-tenner}, showing that for any
separable permutation~$\pi$, $\mu(\one,\pi)$ is either 1, 0 or $-1$.

For a non-separable decomposable permutation $\pi$, our recurrences are not
sufficient to compute the value of $\mu(\sigma, \pi)$. Nevertheless, 
they allow us to give short simple formulas in many special cases.

For instance, suppose that $\sigma$ is indecomposable and that $\pi$
is decomposable and of length at least 3.  Then we show that
$\mu(\sigma,\pi)$ can only be nonzero if all the blocks in the
decomposition of $\pi$ are equal to the same permutation $\pi'>\one$,
except possibly the first and the last block, which may be equal
to~$\one$.  In such cases, $\mu(\sigma,\pi)$ equals
$(-1)^i\mu(\sigma,\pi')$, where $i\in\{0,1,2\}$ is the number of
blocks of $\pi$ that are equal to~$\one$.

As another simple example, our results imply that when $\sigma$ and $\pi$ are
permutations with decompositions $\sigma=\sigma_1+\sigma_2$ and
$\pi=\pi_1+\pi_2$, with $\pi_1$ and $\pi_2$ both different from~$\one$, then
$\mu(\sigma,\pi)=\mu(\sigma_1,\pi_1)\mu(\sigma_2,\pi_2)$ if $\pi_1\neq \pi_2$,
and $\mu(\sigma,\pi)=\mu(\sigma_1,\pi_1)\mu(\sigma_2,\pi_2)+\mu(\sigma,\pi_1)$
if $\pi_1=\pi_2$.

The paper is organized as follows: In the next section we provide
necessary definitions.  In Section~\ref{sec-main} we present the main
results, the recursive formulas for reducing the computation of the
M\"obius function of decomposable permutations to that of
indecomposable permutations.  Section~\ref{sec-sep} deals with the
case of separable permutations and their normal embeddings.  Finally,
in Section~\ref{sec-last} we mention some open problems, in particular
questions about the topology of the order complexes of intervals in
our poset, which we have not dealt with in the present paper.

\section{Definitions and Preliminaries}\label{sec-defs}

An \emph{interval} $[\sigma,\pi]$ in a poset $(\clp,\le)$ is the set $\{\rho\colon
\sigma\le \rho\le \pi\}$. In this paper, we deal exclusively with intervals of the
poset of permutations ordered by pattern containment.

The M\"obius function $\mu(\sigma,\pi)$ of an interval $[\sigma,\pi]$
is uniquely defined by setting $\mu(\sigma,\sigma)=1$ for all $\sigma$
and requiring that
\begin{equation}
  \label{eq:eq-mob}
\sum_{\rho\in[\sigma,\pi]}{\mu(\sigma,\rho)}=0
\end{equation}
for every $\sigma<\pi$. 
When $\sigma\not\leq \pi$, we define $\mu(\sigma,\pi)$
to be zero. 

An equivalent definition is given by Philip Hall's Theorem
\cite[Proposition 3.8.5]{ECI}, which says that
\begin{equation}
  \label{eq:eq-chains}
\mu(\sigma,\pi)= \sum_{C\in\chain \sigma\pi}{(-1)^{L(C)}} = \sum_{i}{(-1)^ic_i},
\end{equation}
where $\chain \sigma \pi$ is the set of chains in $[\sigma,\pi]$ that
contain both $\sigma$ and $\pi$, $L(C)$ denotes the length of the
chain $C$, and $c_i$ is the number of such chains of length $i$ in
$[\sigma,\pi]$.  A chain of length $i$ in a poset is a set of $i+1$
pairwise comparable elements $x_0<x_1<\cdots<x_i$.  For details and
further information, see \cite{ECI}.

The \emph{direct sum}, $\alpha+\beta$, of two nonempty permutations
$\alpha$ and $\beta$ is the permutation obtained by concatenating
$\alpha$ and $\beta'$, where $\beta'$ is $\beta$ with all letters
incremented by the number of letters in $\alpha$. A permutation that
can be written as a direct sum of two non-empty permutations is
\emph{decomposable}, otherwise it is \emph{indecomposable}.  Examples
are $2314576= 231+12+21$, and 231, which is indecomposable.  In the
\emph{skew sum} of $\alpha$ and $\beta$, denoted by $\alpha*\beta$, we
increment the letters of $\alpha$ by the length of $\beta$ to obtain
$\alpha'$ and then concatenate $\alpha'$ and $\beta$.  For example,
$6743512 = 12*213*12$. We say that a permutation is
\emph{skew-indecomposable} if it cannot be written as a skew sum of
smaller permutations.

A \emph{decomposition} of $\pi$ is an expression
$\pi=\pi_1+\pi_2+\dotsb+\pi_k$ in which each summand $\pi_i$ is
indecomposable. The summands $\pi_1,\dotsc,\pi_k$ will be called
\emph{the blocks} of~$\pi$. Every permutation $\pi$ has a unique
decomposition (including an indecomposable permutation $\pi$, whose
decomposition has a single block~$\pi$).

A permutation is \emph{separable} if it can be generated from the
permutation~$\one$ by iterated sums and skew sums. In other words, a
permutation is separable if and only if it is equal to $\one$ or it
can be expressed as a sum or skew sum of separable permutations.

Being separable is equivalent to avoiding the patterns 2413 and 3142,
that is, containing no occurrences of either. Separable permutations
have nice algorithmic properties. For instance, Bose, Buss and
Lubiw~\cite{bbl} have shown that it can be decided in polynomial time
whether $\sigma\le \pi$ when $\sigma$ and $\pi$ are separable, while
for general permutations the problem is NP-hard.

It is sometimes convenient to allow permutations to have zero length,
while in other situations, the permutations are assumed to be
nonempty. The unique permutation of length 1 is denoted by $\one$, and
the unique permutation of length 0 is denoted by $\zero$. We make it a
convention that the permutation $\zero$ is neither decomposable nor
indecomposable.  In other words, whenever we say that a permutation
$\pi$ is decomposable (or indecomposable), we automatically assume
that $\pi$ is nonempty.

Suppose that $\pi$ is a nonempty permutation with decomposition
$\pi_1+\dotsb+\pi_n$. For an integer $i\in\{0,\dotsc,n\}$, we let
$\ppr i$ denote the sum $\pi_1+\pi_2+\dotsb+\pi_i$ and let $\psu i$
denote the sum $\pi_{i+1}+\dotsb+\pi_n$. An empty sum of permutations
is assumed to be equal to $\zero$, so in particular $\ppr 0=\psu
n=\zero$. Any permutation of the form $\ppr i$ for some $i$ will be
called a \emph{prefix} of $\pi$, and any permutation of the form $\psu
i$ is a \emph{suffix} of~$\pi$.  Note that $\mu(\zero,\zero)=1$,
$\mu(\zero,\one)=-1$, and it is easily seen that $\mu(\zero,\tau)=0$
for any $\tau>\one$.
                        
\section{The Main Results}\label{sec-main}
Throughout this section, we assume that $\sigma$ is a nonempty
permutation with decomposition $\sigma_1+\dotsb+\sigma_m$ and that
$\pi=\pi_1+\dotsb+\pi_n$ is a decomposable permutation (so $n\ge2$
and, in particular, $\pi$ is nonempty).  The goal in this section is
to prove a set of recurrences that allow us to express the M\"obius
function $\mu(\sigma,\pi)$ in terms of the values of the form
$\mu(\sigma',\pi')$, where $\pi'\in\{\pi_1,\dotsc,\pi_n\}$ is a block
of~$\pi$ and $\sigma'$ is a sum of consecutive blocks
of~$\sigma$. Note that $\sigma$ may itself be indecomposable, in which
case $m$ is equal to $1$ and $\sigma_1=\sigma$.

There are two main recurrences to prove, dealing respectively with the
cases $\pi_1=\one$ and $\pi_1>\one$.

\begin{proposition}[First Recurrence]\label{pro-first}
  Let $\sigma$ and $\pi$ be nonempty permutations with decompositions
  $\sigma=\sigma_1+\dotsb+\sigma_m$ and $\pi=\pi_1+\dotsb+\pi_n$,
  where $n\ge2$.  Suppose that $\pi_1=\one$. Let $k\ge 1$ be the
  largest integer such that all the blocks $\pi_1,\dotsc,\pi_k$ are
  equal to $\one$, and let $\ell\ge 0$ be the largest integer such
  that all the blocks $\sigma_1,\dotsc,\sigma_\ell$ are equal
  to~$\one$. Then
\[
\mu(\sigma,\pi)=
\begin{cases}  
0&\text{ if }k-1>\ell,\\ -\mu(\ssu{k-1},\psu k)&\text{ if
}k-1=\ell,\\ \mu(\ssu k,\psu k)-\mu(\ssu{k-1},\psu k)&\text{ if
}k-1<\ell.
\end{cases}
\]
\end{proposition}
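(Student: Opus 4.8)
The plan is to prove the three formulas by induction on the length of $\pi$, starting from an explicit description of the interval $[\sigma,\pi]$ that exploits the initial $\one$-blocks. Write $\iota_s$ for the increasing permutation $1\,2\,\cdots\,s$, so that $\iota_0=\zero$; by hypothesis $\pi=\iota_k+\psu k$, where $\psu k$ is $\zero$ or has an indecomposable first block of length $\geq2$, and likewise $\sigma=\iota_\ell+\ssu\ell$.

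The first, elementary, ingredient is the observation that for permutations $\alpha,\beta$ and any $s\geq0$ one has $\alpha\leq\iota_s+\beta$ if and only if $\alpha=\iota_j+\alpha'$ for some $j\in\{0,\dots,s\}$ and some $\alpha'\leq\beta$, and that for fixed $j$ the map $\alpha'\mapsto\iota_j+\alpha'$ is an order embedding of the whole pattern poset. (The first $s$ letters of $\iota_s+\beta$ are at once the leftmost and the smallest, so the part of an occurrence of $\alpha$ lying among them is forced to be an increasing prefix of $\alpha$ in both positions and values.) Applying this to $\pi=\iota_k+\psu k$, and using the analogous description of ``$\sigma\leq\iota_j+\rho'$'' (which reduces to $\ssu{\min(j,\ell)}\leq\rho'$, since $\sigma=\iota_\ell+\ssu\ell$ with $\ssu\ell$ not starting with $\one$), one sorts $[\sigma,\pi]$ according to the quantity $j:=\min(a,k)$, where $a$ is the number of leading $\one$-blocks of the element. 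This yields a disjoint decomposition $[\sigma,\pi]=E_0\sqcup\dots\sqcup E_k$ in which, for $j<k$,
\[
E_j=\{\,\iota_j+\rho'\ :\ \ssu{\min(j,\ell)}\leq\rho'\leq\psu k,\ \rho'\text{ does not start with }\one\,\},
\]
while $E_k=\{\,\iota_k+\rho'\ :\ \ssu{\min(k,\ell)}\leq\rho'\leq\psu k\,\}$, so that through $\rho'\mapsto\iota_k+\rho'$ the block $E_k$ is a faithful copy of the ordinary interval $[\ssu{\min(k,\ell)},\psu k]$.

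Next I would feed this decomposition into the defining identity $\sum_{\rho\in[\sigma,\pi]}\mu(\sigma,\rho)=0$ and solve for $\msp$. Any element of $E_j$ with $j\geq1$ is either $\one$ (a trivial case) or a decomposable permutation whose first block is $\one$, and has length $<|\pi|$ unless it equals $\pi$; so the induction hypothesis applies to it and rewrites $\mu(\sigma,\rho)$ via the formula being proved, pushing the top down to $\psu k$ or a pattern of $\psu k$. The block $E_0$, consisting of the $\rho\leq\psu k$ with $\sigma\leq\rho$ that do not start with $\one$, is handled by writing $\sum_{\rho\in E_0}\mu(\sigma,\rho)=[\sigma=\psu k]-\sum\mu(\sigma,\rho)$, the last sum being over $\rho\in[\sigma,\psu k]$ that \emph{do} start with $\one$ — again within reach of the induction hypothesis. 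The ``does not start with $\one$'' clauses in $E_1,\dots,E_{k-1}$ are neutralised the same way, by adding and subtracting their $\one$-prefixed counterparts; this is the source of the telescoping. After all cancellations the only surviving contributions are $\mu(\ssu k,\psu k)$ (from $E_k$) and $\mu(\ssu{k-1},\psu k)$ (from the interface between consecutive $E_j$'s), with the indices $\min(j,\ell)$ all collapsing to $\ell$ once $j\geq\ell$; this is exactly what produces the trichotomy according to whether $k-1$ exceeds, equals, or is less than $\ell$ — in the first case even those two surviving terms cancel against the extra top blocks, in the second only $-\mu(\ssu{k-1},\psu k)$ is left, and in the third both terms remain.

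The main obstacle is this bookkeeping: arranging the telescoping so that all the ``starts with $\one$''/``does not start with $\one$'' corrections across the $E_j$ annihilate, and disposing cleanly of the degenerate configurations $\psu k=\zero$, $\psu k=\one$, $\ssu\ell=\zero$ (that is, $\ell=m$) and $\sigma=\psu k$, in which the conventions $\mu(\zero,\zero)=1$, $\mu(\zero,\one)=-1$ and $\mu(\zero,\tau)=0$ for $\tau>\one$ are what make the formulas come out right. I expect the vanishing case $k-1>\ell$ to be the easiest once the machinery is set up — there the $k-\ell\geq2$ topmost blocks $E_{\ell},\dots,E_k$ are essentially the same set repeated, so their alternating contributions kill each other — while the case $k-1<\ell$, in which $\sigma$ still carries leading $\one$-blocks after matching $\iota_k$ and so the ``bottom'' $\ssu{\min(j,\ell)}$ genuinely varies with $j$ near the top, will need the most care.
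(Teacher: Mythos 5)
Your structural lemma is correct and is a genuinely different starting point from the paper: for $\pi=(k\times\one)+\psu k$ one does get the partition of $[\sigma,\pi]$ into the blocks $E_0,\dotsc,E_k$ you describe, whereas the paper never analyses the interval itself but instead applies Philip Hall's theorem and partitions the \emph{chains} from $\sigma$ to $\pi$ according to a ``pivot'' (the largest chain element of minimal degree), disposing of all chains whose pivot is neither $\pi$ nor $\psu1$ by the sign-reversing involution that inserts or deletes $\one+\tau$ above the pivot $\tau$. The difficulty is that the cancellation you defer as ``bookkeeping'' is the actual content of the proof, and in the case $k-1<\ell$ it does not close up along the lines you sketch.

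Concretely, set $S(\tau)=\sum\mu(\tau,\rho')$, the sum over those $\rho'\in[\tau,\psu k]$ whose first block is not $\one$. Applying the induction hypothesis inside each of $E_0,\dotsc,E_{k-1}$, their contributions do telescope into $S$-quantities, and this is enough to finish the cases $k-1>\ell$ and $k-1=\ell$ (there every element of $E_k$ below $\pi$ has, by induction, M\"obius value $0$ or $-\mu(\ssu{k-1},\cdot)$, and the $S$-terms cancel without ever being evaluated). But when $\ell\ge k$, the block $E_k$ contains elements $(k\times\one)+\rho'$ in which $\rho'\in[\ssu k,\psu k]$ itself begins with $\one$, say $\rho'=(d\times\one)+\rho''$ with $d\ge1$ and $\rho''$ not beginning with $\one$; the induction hypothesis converts their total contribution into sums of $\mu(\ssu{k+d},\rho'')$ and $\mu(\ssu{k+d-1},\rho'')$ taken over the sets $\{\rho''\colon (d\times\one)+\rho''\le\psu k\}$. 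These index sets are \emph{not} intervals $[\tau,\psu k]$ --- the condition $(d\times\one)+\rho''\le\psu k$ is strictly stronger than $\rho''\le\psu k$ because $\psu k$ does not begin with $\one$ --- they vary with $d$, and nothing in $E_0,\dotsc,E_{k-1}$ is available to cancel them. Already for $\sigma=12$ and $\pi=13254$ your scheme requires the unproved identity $\sum_{\rho'}\mu(\sigma,\one+\rho')=-S(\ssu1)$, the sum running over the $\rho'\in[\ssu1,\psu1]$ that begin with $\one$; identities of this kind are exactly where something like the paper's $\one$-tightness machinery (which it develops only for the second recurrence) would have to be re-imported. So the heart of the argument is missing rather than routine, and as written the proof has a genuine gap.
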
 

Note that the suffixes $\ssu{k-1}$, $\ssu{k}$ and $\psu k$ in the
statement of Proposition~\ref{pro-first} may be empty.  This first
recurrence shows how to compute the M\"obius function when $\pi$
starts with $123\ldots k$ for some $k\ge1$.  The second recurrence
takes care of the remaining cases, that is, when $\pi$ does not start
with $1$.

\begin{proposition}[Second Recurrence]\label{pro-second}
  Let $\sigma$ and $\pi$ be nonempty permutations with decompositions
  $\sigma=\sigma_1+\dotsb+\sigma_m$ and $\pi=\pi_1+\dotsb+\pi_n$,
  where $n\ge2$.  Suppose that $\pi_1>\one$. Let $k\ge 1$ be the
  largest integer such that all the blocks $\pi_1,\dotsc,\pi_k$ are
  equal to $\pi_1$. Then
\begin{equation}
  \mu(\sigma,\pi)=\sum_{i=1}^m \sum_{j=1}^k \mu(\spr i,\pi_1)\mu(\ssu
  i, \psu j)\label{eq-second}.
\end{equation}
\end{proposition}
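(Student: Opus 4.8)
The plan is to use Philip Hall's formula~\eqref{eq:eq-chains} and classify the elements $\rho$ of the interval $[\sigma,\pi]$—or, better, to set up a direct recursion on $\mu$ via~\eqref{eq:eq-mob} by examining how an arbitrary $\rho\in[\sigma,\pi]$ sits with respect to the first block $\pi_1$ of $\pi$. Since $\pi_1>\one$ and the first $k$ blocks of $\pi$ are all equal to $\pi_1$, every $\rho\le\pi$ has a decomposition $\rho=\rho_1+\dotsb+\rho_t$, and there is a well-defined way to "align" the blocks of $\rho$ with those of $\pi$: I would show that $\rho\le\pi$ if and only if $\rho$ can be split as $\rho=\alpha+\beta$ where $\alpha\le\pi_1+\dotsb+\pi_j$ for some $j\le k$ and, crucially, $\alpha$'s last block is the part embedded into the $j$-th copy of $\pi_1$, with $\beta\le\psu j$. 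The key structural point—this is where $\pi_1>\one$ (equivalently $\pi_1$ indecomposable and of length $\ge 2$) is used—is that an indecomposable block of $\pi$ cannot be "shared" ambiguously: any occurrence of $\rho$ in $\pi$ induces a canonical cut point telling us the last block-index $j\in\{1,\dots,k\}$ of $\pi$ that receives a nonempty part of $\rho$, and within that copy of $\pi_1$ the embedded portion is a prefix of consecutive blocks $\spr i$ of $\sigma$ together with the final active block of $\rho$.

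Concretely, I would prove the following bijective/summation identity. Fix the interval and sum~\eqref{eq:eq-mob}: grouping $\rho\in[\sigma,\pi]$ by the pair $(i,j)$, where $j$ is the largest index $\le k$ such that block $j$ of $\pi$ receives a nonempty piece of $\rho$, and $i$ is determined so that $\spr i$ is the part of $\sigma$ forced into $\ppr{j-1}$ (so the remaining blocks $\ssu i$ of $\sigma$ must embed into $\psu j$), one decomposes $\rho$ itself as $\rho=\rho'+\rho''$ with $\rho'\le\pi_1$ containing $\spr i$ restricted appropriately and $\rho''\le\psu j$ containing $\ssu i$. This should give
\[
\sum_{\rho\in[\sigma,\pi]}\mu(\sigma,\rho)
=\sum_{i=1}^m\sum_{j=1}^k\Bigl(\sum_{\rho'}\mu(\spr i,\rho')\Bigr)\Bigl(\sum_{\rho''}\mu(\ssu i,\rho'')\Bigr),
\]
where the inner sums run over $\rho'\in[\spr i,\pi_1]$ and $\rho''\in[\ssu i,\psu j]$; but this is not quite right as stated because $\mu(\sigma,\rho)$ does not factor as $\mu(\spr i,\rho')\mu(\ssu i,\rho'')$ in general. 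The correct approach is instead to induct on $|\pi|$: assuming the recurrence (and the First Recurrence) for all smaller decomposable permutations, compute $\mu(\sigma,\pi)$ from $-\sum_{\sigma\le\rho<\pi}\mu(\sigma,\rho)$ by substituting the already-known values of $\mu(\sigma,\rho)$ for each proper $\rho<\pi$ (each such $\rho$ is either indecomposable, in which case it contributes boundary terms, or decomposable with $|\rho|<|\pi|$), and then show the resulting alternating sum telescopes to the claimed double sum.

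The technical heart, and the main obstacle, is the bookkeeping for the telescoping: one must verify that when the contributions of all proper $\rho<\pi$ are expanded via the inductive hypothesis, the terms involving blocks of $\pi$ other than the leading run of $\pi_1$'s cancel, leaving only the "interface" terms $\mu(\spr i,\pi_1)\mu(\ssu i,\psu j)$. I expect this to require a careful case analysis on whether $\rho$'s leading blocks are all $\one$, a single copy of $\pi_1$, or several copies, dovetailing with the First Recurrence to handle the $\rho$ whose decomposition starts with $\one$. A cleaner alternative I would try first is to prove the identity
\[
\mu(\sigma,\pi)=\sum_{i=1}^m\mu(\spr i,\pi_1)\,\mu(\ssu i,\psu 1)
\]
for the case $k=1$ directly from Hall's chain formula—by a sign-reversing involution on chains that are "not split cleanly at the first block"—and then iterate it $k$ times, peeling off one copy of $\pi_1$ at a time, to obtain the stated formula with the $\sum_{j=1}^k$; the iteration step is exactly another application of the $k=1$ identity to $\psu 1=\pi_1+\dotsb$, and the sum over $j$ appears as the geometric-style accumulation of these peels. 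The obstacle there migrates to justifying the involution, i.e.\ showing every maximal chain in $[\sigma,\pi]$ either has a canonical "first descent past $\pi_1$" that can be toggled, or else corresponds bijectively to a pair of chains in $[\spr i,\pi_1]$ and $[\ssu i,\psu j]$.
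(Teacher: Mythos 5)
Your proposal is a sequence of approach sketches rather than a proof, and the one closest to working --- the ``cleaner alternative'' via a sign-reversing involution on Hall chains that do not split cleanly at the first block --- contains a concrete error. The identity $\mu(\sigma,\pi)=\sum_{i=1}^m\mu(\spr i,\pi_1)\,\mu(\ssu i,\psu 1)$ is \emph{only} valid when $\pi_2\neq\pi_1$, so it cannot be ``iterated $k$ times, peeling off one copy of $\pi_1$ at a time'': at every peel after the first you are applying it to a permutation whose first block is repeated, where it is false. (Check $\sigma=\sigma_1+\sigma_2$, $\pi=\pi_1+\pi_1$ with $\pi_1>\one$: the product formula gives $\mu(\sigma_1,\pi_1)\mu(\sigma_2,\pi_1)$, but the correct value carries the extra summand $\mu(\sigma,\pi_1)$.) The sum over $j$ in \eqref{eq-second} is not a ``geometric accumulation of peels''; it comes from a genuinely new family of chains that your involution cannot kill, namely chains whose second-largest element is exactly $\psu 1$ --- the paper isolates these as the ``almost $\rho$-tight'' chains in the second part of Lemma~\ref{lem-tight}, and unwinding that correction term (Lemma~\ref{lem-tight2}) is precisely what produces $\sum_{j=1}^k\mu(\ssu i,\psu j)$. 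Your scheme has no mechanism for generating these terms.

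Beyond that, the technical heart is missing in all three variants. Your first attempt you correctly abandon ($\mu(\sigma,\rho)$ does not factor); your second (induct on $|\pi|$ and hope the substitution telescopes) is left entirely to ``bookkeeping'' that you do not perform and that I do not believe goes through as described. For the involution route, the paper needs two separate cancellations: first reducing $\chain{\sigma}{\pi}$ to $\one$-tight chains (toggling $\rho+\alpha_i$ in or out of the chain, which requires $\pi_1$ indecomposable and distinct from the top block to avoid hitting $\pi$ itself), and then reducing further to \emph{split} chains, which requires defining a head/tail decomposition of every chain element by downward induction from $\pi$ and verifying in two cases that the toggle preserves all heads and tails. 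Only after that does the split structure give the bijection $\cC_i\cong\tch{\spr i}{\pi_1}{\one}\times\tch{\ssu i}{\psu 1}{\pi_1}$ whose weights multiply. None of this is constructed in your proposal, and you flag it yourself as the obstacle; as written the argument does not establish the recurrence.
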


Note that Propositions~\ref{pro-first} and~\ref{pro-second} remain
true when all the direct sums are replaced with skew sums, and the
decompositions are replaced with skew decompositions. To see this, it
is enough to observe that if $\bar\pi$ denotes the reversal of $\pi$
(i.e., $\bar\pi$ is the permutation obtained by reversing the order of
elements of $\pi$), then $\mu(\sigma,\pi)=\mu(\bar\sigma,\bar\pi)$ for
any $\sigma$ and $\pi$, since $[\sigma,\pi]$ and
$[\bar\sigma,\bar\pi]$ are isomorphic posets.

Before we prove the above two recurrences, we give three corollaries
to provide some idea of how the second recurrence can be used.  When
we write $k\times\pi$ we mean a sum $\pi+\pi+\cdots+\pi$ with $k$
summands.

\begin{corollary}\label{coro-first-recurrence}
  Let $\sigma$, $\pi$ and $k$ be as in Proposition~\ref{pro-second},
  and suppose that $\sigma$ is indecomposable, that is, $m=1$.  Then
$$\mu(\sigma,\pi)=
\begin{cases}
\mu(\sigma,\pi_1), & \text{if~} \pi=k\times\pi_1\\ -\mu(\sigma,\pi_1),
& \text{if~} \pi=k\times\pi_1+\one\\ 0, & \text{otherwise}
\end{cases}
$$
\end{corollary}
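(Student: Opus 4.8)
The plan is to derive Corollary~\ref{coro-first-recurrence} directly from the Second Recurrence (Proposition~\ref{pro-second}) by specializing to $m=1$, that is, $\sigma=\sigma_1$ indecomposable. When $m=1$, the outer sum in~\eqref{eq-second} collapses to the single term $i=1$, giving
\[
\mu(\sigma,\pi)=\sum_{j=1}^k \mu(\spr 1,\pi_1)\,\mu(\ssu 1,\psu j)
=\mu(\sigma,\pi_1)\sum_{j=1}^k\mu(\ssu 1,\psu j),
\]
since $\spr 1=\sigma_1=\sigma$ and $\ssu 1=\sigma_{>1}=\zero$. So everything reduces to evaluating $\sum_{j=1}^k\mu(\zero,\psu j)$, using the facts recorded at the end of Section~\ref{sec-defs}: $\mu(\zero,\zero)=1$, $\mu(\zero,\one)=-1$, and $\mu(\zero,\tau)=0$ for every $\tau>\one$.

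Next I would analyze the suffixes $\psu j$ for $j=1,\dots,k$, recalling that $k$ is the largest index with $\pi_1=\pi_2=\dots=\pi_k$. There are three cases according to how $\pi$ compares with $k\times\pi_1$. \emph{Case 1:} $\pi=k\times\pi_1$, i.e.\ $n=k$. Then $\psu k=\zero$ and $\psu j=\pi_{j+1}+\dots+\pi_k>\one$ for $j<k$ (a nonempty sum of at least one copy of $\pi_1>\one$, hence itself $>\one$ whenever it has length $\ge2$; and when it is a single block it equals $\pi_1>\one$). Thus only the $j=k$ term survives, contributing $\mu(\zero,\zero)=1$, and $\mu(\sigma,\pi)=\mu(\sigma,\pi_1)$. \emph{Case 2:} $\pi=k\times\pi_1+\one$, i.e.\ $n=k+1$ and $\pi_{k+1}=\one$. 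Then $\psu k=\pi_{k+1}=\one$, contributing $\mu(\zero,\one)=-1$, while $\psu j$ for $j<k$ is $(k-j)\times\pi_1+\one$, which has length $\ge2$ and is therefore $>\one$, contributing $0$. Hence $\mu(\sigma,\pi)=-\mu(\sigma,\pi_1)$. \emph{Case 3:} everything else. Here either $n\ge k+1$ with $\pi_{k+1}>\one$, or $n\ge k+2$; in every such situation $\psu j$ for each $j\le k$ is a nonempty permutation of length $\ge2$ (it contains the block $\pi_{k+1}$ and possibly more), so $\psu j>\one$ and $\mu(\zero,\psu j)=0$ for all $j$, giving $\mu(\sigma,\pi)=0$.

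The only mildly delicate point — and the one I would be most careful about — is the bookkeeping in Case 3 showing that $\psu j>\one$ for \emph{every} $j\in\{1,\dots,k\}$, including $j=k$: one must check that the maximality of $k$ forces $\psu k$ to be either empty (Case 1), a single $\one$ (Case 2), or of length at least $2$. This follows because if $\psu k$ is nonempty and not equal to $\one$, its first block is $\pi_{k+1}$; if $\pi_{k+1}>\one$ then $\psu k$ has length $\ge2$, and if $\pi_{k+1}=\one$ then, since $k$ is maximal, $n>k+1$ is the only way to avoid Case 2, so $\psu k=\one+\pi_{k+2}+\dots$ again has length $\ge2$. With that observation in hand, the three cases are exhaustive and mutually exclusive, and no further computation is needed. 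I do not anticipate any genuine obstacle; this is a clean specialization argument.
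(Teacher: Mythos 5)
Your proposal is correct and follows essentially the same route as the paper: specialize the second recurrence to $m=1$, factor out $\mu(\sigma,\pi_1)$, and evaluate the remaining sum using $\mu(\zero,\zero)=1$, $\mu(\zero,\one)=-1$, and $\mu(\zero,\tau)=0$ for $\tau>\one$. The paper merely compresses your three-case analysis by observing at once that only the $j=k$ term can survive, so the sum equals $\mu(\zero,\psu k)$; the substance is identical.
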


\begin{proof}
  Since $m=1$, Equation \eqref{eq-second} takes the form
\begin{align*}
  \mu(\sigma,\pi)&=\sum_{j=1}^k \mu(\spr 1,\pi_1)\mu(\ssu 1, \psu
  j)\\ &=\sum_{j=1}^k \mu(\sigma,\pi_1)\mu(\zero, \psu j)\\ &=
  \mu(\sigma,\pi_1)\mu(\zero, \psu k),
\end{align*}                           
where the last equality follows from the fact that $\mu(\zero,\psu j)$
is equal to 0 whenever $\psu j$ has more than one block.
                
We have $\mu(\zero, \psu k)=1$ when $\psu k=\zero$, $\mu(\zero, \psu
k)=-1$ when $\psu k=\one$, and $\mu(\zero, \psu k)=0$ otherwise. In
particular, $\mu(\zero, \psu k)$ can only be nonzero either when $k=n$
and $\pi=k\times\pi_1$, or when $k=n-1$ and $\pi=k\times\pi_1+\one$.
\end{proof}

Corollary \ref{coro-first-recurrence} implies that if $\sigma$ is
indecomposable and $\pi$ is decomposable, then almost always
$\mu(\sigma,\pi)=0$, since the two exceptions for $\pi$ given in the
corollary are of a proportion that clearly goes to zero among
decomposable permutations as their length goes to infinity.

\begin{corollary}
  With $\sigma$ and $\pi$ as in Proposition~\ref{pro-second}, assume
  that $\sigma$ and $\pi$ decompose into exactly two blocks, with
  $\sigma=\sigma_1+\sigma_2$ and $\pi=\pi_1+\pi_2$, and that $\pi_1,
  \pi_2>\one$. Then
$$\mu(\sigma,\pi)=
\begin{cases}
  \mu(\sigma_1,\pi_1)\mu(\sigma_2,\pi_2), & \text{if~}
  \pi_1\ne\pi_2\\ \mu(\sigma_1,\pi_1)\mu(\sigma_2,\pi_2)+\mu(\sigma,\pi_1)
  & \text{if~} \pi_1=\pi_2
\end{cases}
$$
\end{corollary}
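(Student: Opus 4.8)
The plan is to specialize the Second Recurrence (Proposition~\ref{pro-second}) to the case $m=2$, $n=2$, and then discard the terms that vanish. Since $\pi=\pi_1+\pi_2$ has exactly two blocks, the integer $k$ occurring in Proposition~\ref{pro-second} (the largest index with $\pi_1=\dots=\pi_k$) equals $1$ when $\pi_1\ne\pi_2$ and equals $2$ when $\pi_1=\pi_2$, so the two branches of the claimed formula correspond precisely to these two cases.

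Suppose first that $\pi_1\ne\pi_2$, so $k=1$. Then \eqref{eq-second} becomes $\mu(\sigma,\pi)=\sum_{i=1}^{2}\mu(\spr i,\pi_1)\,\mu(\ssu i,\psu 1)$, where $\psu 1=\pi_2$. The term $i=1$ is $\mu(\sigma_1,\pi_1)\mu(\sigma_2,\pi_2)$, since $\spr 1=\sigma_1$ and $\ssu 1=\sigma_2$. The term $i=2$ is $\mu(\sigma,\pi_1)\mu(\zero,\pi_2)$, and this vanishes because $\pi_2>\one$ forces $\mu(\zero,\pi_2)=0$ (recorded at the end of Section~\ref{sec-defs}). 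Hence $\mu(\sigma,\pi)=\mu(\sigma_1,\pi_1)\mu(\sigma_2,\pi_2)$, which is the first branch.

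Now suppose $\pi_1=\pi_2$, so $k=2$ and \eqref{eq-second} is a sum over $(i,j)\in\{1,2\}^2$, with $\psu 1=\pi_2$ and $\psu 2=\zero$. The term $(i,j)=(1,1)$ contributes $\mu(\sigma_1,\pi_1)\mu(\sigma_2,\pi_2)$, and the term $(2,2)$ contributes $\mu(\sigma,\pi_1)\mu(\zero,\zero)=\mu(\sigma,\pi_1)$. The term $(1,2)$ contains the factor $\mu(\sigma_2,\zero)$, which is $0$ because $\sigma_2$ is a nonempty permutation and so $\sigma_2\not\le\zero$; the term $(2,1)$ contains $\mu(\zero,\pi_2)=0$, again because $\pi_2=\pi_1>\one$. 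Summing the two surviving terms gives the second branch.

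There is no genuine obstacle here; the corollary is a pure specialization of Proposition~\ref{pro-second}. The only points to keep straight are the correct value of $k$ in each case and the boundary conventions $\mu(\zero,\zero)=1$, $\mu(\zero,\tau)=0$ for $\tau>\one$, and $\mu(\tau,\rho)=0$ whenever $\tau\not\le\rho$, which are exactly what kill the degenerate terms in which an empty prefix or suffix of $\sigma$ is paired with a non-singleton permutation.
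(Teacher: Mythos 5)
Your proof is correct and follows essentially the same route as the paper: both simply expand Equation~\eqref{eq-second} with $m=2$ and $k=1$ or $k=2$ according to whether $\pi_1\ne\pi_2$ or $\pi_1=\pi_2$, and then kill the degenerate terms using $\mu(\zero,\pi_2)=0$ for $\pi_2>\one$, $\mu(\sigma_2,\zero)=0$, and $\mu(\zero,\zero)=1$. Your identification of which value of $k$ applies in each case and which boundary conventions eliminate which terms matches the paper exactly.
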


\begin{proof}
If $\pi_1\neq \pi_2$ (so $k=1$), then the summation in
Equation~\eqref{eq-second} expands into
\begin{align*}
  \mu(\sigma,\pi)=\mu(\sigma_1,\pi_1)\mu(\sigma_2,\pi_2)+\mu(\sigma,\pi_1)\mu(\zero,\pi_2).
\end{align*}
Since $\pi_2>\one$, the second summand vanishes and
$\mu(\sigma,\pi)=\mu(\sigma_1,\pi_1)\mu(\sigma_2,\pi_2)$.

If, on the other hand, $\pi_1=\pi_2$, then Equation~\eqref{eq-second}
states that $\mu(\sigma,\pi)$ is equal to
\begin{align*} 
  &\mu(\sigma_1,\pi_1)\mu(\sigma_2,\pi_2)+\mu(\sigma_1,\pi_1)\mu(\sigma_2,\zero)+
  \mu(\sigma,\pi_1)\mu(\zero,\pi_2)+\mu(\sigma,\pi_1)\mu(\zero,\zero)\\ =&\mu(\sigma_1,\pi_1)\mu(\sigma_2,\pi_2)+\mu(\sigma,\pi_1).\qedhere
\end{align*}
\end{proof}

\begin{remark}
  An obvious question to ask is whether the product formula
  $\mu(\sigma,\pi)=\mu(\sigma_1,\pi_1)\mu(\sigma_2,\pi_2)$, in the
  case when $\pi_1\ne\pi_2$, is a result of the interval
  $(\sigma,\pi)$ being isomorphic to the direct product of the
  intervals $[\sigma_1,\pi_1]$ and $[\sigma_2,\pi_2]$.  Although this
  seems to occur frequently, it does not hold in general.
\end{remark}

The following corollary is an immediate consequence of Proposition
\ref{pro-first} (the case when $k-1=\ell=0$).

\begin{corollary}\label{coro-not-high}
Suppose $\sigma$ and $\pi$ are permutations of length at least two,
such that neither begins with 1.  Then $\mu(\sigma,\one+\pi)=-\msp$.
\end{corollary}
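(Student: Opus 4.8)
The plan is to obtain this as a direct specialization of Proposition~\ref{pro-first}, applied to the pair $(\sigma,\one+\pi)$. First I would set up the decompositions. Write $\pi=\pi_1+\dotsb+\pi_n$ for the decomposition of $\pi$; since $\pi$ does not begin with~$1$ we have $\pi_1>\one$, and since $\pi$ has length at least two it is in particular nonempty, so $\one+\pi$ has decomposition $\one+\pi_1+\dotsb+\pi_n$, which has at least two blocks. Thus $\one+\pi$ is decomposable, and Proposition~\ref{pro-first} applies with $\one+\pi$ playing the role of the permutation called~$\pi$ in its statement.

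Next I would evaluate the two parameters in that proposition. The second block of $\one+\pi$ is $\pi_1>\one$, so the largest $k$ for which the first $k$ blocks of $\one+\pi$ all equal $\one$ is $k=1$. Writing $\sigma=\sigma_1+\dotsb+\sigma_m$, the hypothesis that $\sigma$ does not begin with~$1$ gives $\sigma_1>\one$, so the largest $\ell$ for which the first $\ell$ blocks of $\sigma$ all equal $\one$ is $\ell=0$. Hence $k-1=0=\ell$, which is precisely the middle case of Proposition~\ref{pro-first}.

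That case of the proposition reads $\mu(\sigma,\one+\pi)=-\mu\bigl(\sigma_{>k-1},(\one+\pi)_{>k}\bigr)$, where the suffixes are taken in $\sigma$ and in $\one+\pi$ respectively. It then remains only to unwind the notation: $\sigma_{>k-1}=\sigma_{>0}=\sigma$, and $(\one+\pi)_{>k}=(\one+\pi)_{>1}=\pi_1+\dotsb+\pi_n=\pi$. Substituting, we get $\mu(\sigma,\one+\pi)=-\mu(\sigma,\pi)=-\msp$, as desired.

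Since the argument is nothing more than substituting the correct values into Proposition~\ref{pro-first}, I do not expect any substantial obstacle; the only points that require a little care are that the suffixes appearing in that proposition refer to the permutation $\one+\pi$ rather than to $\pi$ itself, and that $\one+\pi$ genuinely has at least two blocks (so that the hypothesis $n\ge 2$ of the proposition is met) — both of which are immediate from the fact that $\pi$ is a nonempty permutation that does not begin with~$1$.
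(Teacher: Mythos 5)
Your proposal is correct and is exactly the paper's argument: the authors state that the corollary is the case $k-1=\ell=0$ of Proposition~\ref{pro-first}, which is precisely the specialization you carry out. The verifications you include (that $\one+\pi$ has at least two blocks, that $k=1$ and $\ell=0$ because neither permutation begins with $1$, and that the suffixes unwind to $\sigma$ and $\pi$) are all accurate.
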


Both recurrences (Propositions \ref{pro-first} and \ref{pro-second})
are proved using arguments that involve cancellation between certain
types of chains in the poset of permutations. Let us first introduce
some useful notation. For a chain
$C=\{\alpha_0<\alpha_1<\dotsb<\alpha_k\}$ of permutations let $L(C)$
denote the length of $C$, which is one less than the number of
elements of~$C$. The \emph{weight of $C$}, denoted by $w(C)$, is the
quantity $(-1)^{L(C)}$. If $\cC$ is any set of chains, then the
\emph{weight of $\cC$} is defined by
\[
w(\cC)=\sum_{C\in \cC} w(C)=\sum_{C\in \cC} (-1)^{L(C)}.
\]

Recall that $\chain{\sigma}{\pi}$ is the set of all the chains from
$\sigma$ to~$\pi$ that contain both $\sigma$ and~$\pi$. We know that
$\mu(\sigma,\pi)=w(\chain{\sigma}{\pi})$, by Philip Hall's Theorem.

For a chain $C=\{\alpha_0<\alpha_1<\dotsb<\alpha_k\}$ and a permutation 
$\beta$, we let $\beta+C$ denote the chain $\{ 
\beta+\alpha_0<\beta+\alpha_1<\dotsb<\beta+\alpha_k\}$. The chain $C+\beta$ is 
defined analogously.
                   
\subsection{Proof of the first recurrence}

Let us now turn to the proof of Proposition~\ref{pro-first}. Suppose
that $\sigma$, $\pi$, $m$, $n$, $k$, and $\ell$ are as in the
statement of the proposition. For a permutation $\tau\in\cS$, define
the \emph{degree of $\tau$}, denoted by $\deg(\tau)$, to be the
largest integer $d$ such that $\tau$ can be expressed as
$d\times\one+\tau'$ for some (possibly empty) permutation~$\tau'$. In
particular, we have $k=\deg(\pi)$ and $\ell=\deg(\sigma)$.

Let $C=\{\tau_0<\tau_1<\dotsb<\tau_p\}$ be a chain of permutations. We say 
that a permutation $\tau_i\in C$ is the \emph{pivot} of the chain $C$, if 
$\deg(\tau_i)<\deg(\tau_j)$ for each $j>i$, and $\deg(\tau_i)\le 
\deg(\tau_j)$ for each $j\le i$. In other words, the pivot is the element of 
the chain with minimum degree, and if there are more elements of minimum 
degree, the pivot is the largest of them.

Let $\rho$ denote the permutation~$\psu 1$. Obviously $\deg(\rho)=k-1$ and 
$\one+\rho=\pi$. We partition the set of chains $\chain{\sigma}{\pi}$ into 
three disjoint subsets, denoted by $\cC_a$, $\cC_b$ and $\cC_c$, and we compute
the weight of each subset separately. A chain $C\in\chain{\sigma}{\pi}$ belongs
to $\cC_a$ if its pivot is the permutation~$\pi$, the chain $C$ belongs to
$\cC_b$ if its pivot is the permutation $\rho$, and $C$ belongs to $\cC_c$
otherwise. We now separate the main steps of the proof into independent claims.
                                                                             
\begin{claim}\label{cla-c1}
If $\deg(\sigma)<\deg(\pi)$ (so $\ell<k$), then $\cC_a$ is empty.
Otherwise, $w(\cC_a)=\mu(\ssu k, \psu k)$.
\end{claim}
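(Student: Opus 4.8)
The plan is to produce a length-preserving bijection between $\cC_a$ and the set of all chains $\chain{\ssu k}{\psu k}$; Philip Hall's Theorem then gives $w(\cC_a)=w(\chain{\ssu k}{\psu k})=\mu(\ssu k,\psu k)$. The key observation is a degree reformulation of membership in $\cC_a$. Since $\pi$ is the top element of every chain $C=\{\tau_0<\dots<\tau_p\}$ in $\chain{\sigma}{\pi}$, the defining condition for $\pi=\tau_p$ to be the pivot of $C$ reads simply $\deg(\pi)\le\deg(\tau_j)$ for all $j$, that is (recalling $\deg(\pi)=k$), $\deg(\tau_j)\ge k$ for every $j$. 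Because $\sigma$ itself lies in every such chain and $\deg(\sigma)=\ell$, this already shows that $\cC_a$ is empty whenever $\ell<k$, which is the first assertion.

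Now assume $\ell\ge k$. Then $\sigma_1=\dots=\sigma_k=\one$, so $\sigma=k\times\one+\ssu k$, and likewise $\pi=k\times\one+\psu k$. Moreover every permutation $\tau$ with $\deg(\tau)\ge k$ has a unique expression $\tau=k\times\one+\hat\tau$, where $\hat\tau$ is the sum of the blocks of $\tau$ after the first $k$. The crucial ingredient is the cancellation lemma: for all permutations $\alpha,\beta$,
\[
k\times\one+\alpha\ \le\ k\times\one+\beta \quad\Longleftrightarrow\quad \alpha\le\beta .
\]
The direction ``$\Leftarrow$'' is immediate, embedding the two summands on the left into the matching summands on the right. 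For ``$\Rightarrow$'' it is enough, by iterating, to handle $k=1$: in an occurrence of $\one+\alpha$ inside $\one+\beta$, look at the leftmost selected position; if it is the leading letter of $\one+\beta$, deleting it turns the occurrence into an occurrence of $\alpha$ inside $\beta$; otherwise the whole occurrence sits inside the $\beta$-part, so $\one+\alpha\le\beta$ and hence $\alpha\le\beta$.

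With the lemma available, define $\Phi\colon\cC_a\to\chain{\ssu k}{\psu k}$ by $\Phi(\{\tau_0<\dots<\tau_p\})=\{\hat\tau_0<\dots<\hat\tau_p\}$; the lemma makes $\tau_i<\tau_j$ equivalent to $\hat\tau_i<\hat\tau_j$, so this is a chain of the same length in $[\ssu k,\psu k]$ with $\hat\tau_0=\ssu k$ and $\hat\tau_p=\psu k$. Conversely, adding $k\times\one$ to every element of a chain in $\chain{\ssu k}{\psu k}$ yields a chain from $\sigma$ to $\pi$ all of whose members have degree $\ge k$, hence a member of $\cC_a$ by the reformulation above; this is the two-sided inverse of $\Phi$. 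Therefore $\Phi$ is a length-preserving bijection, and $w(\cC_a)=\mu(\ssu k,\psu k)$ follows. The only substantial step is the ``$\Rightarrow$'' half of the cancellation lemma — the case analysis on the leftmost letter of an occurrence — while the rest is bookkeeping about degrees and block decompositions; the degenerate situations in which $\ssu k$ or $\psu k$ is empty (e.g.\ $k=n$ or $m=k$) need no separate treatment, since the bijection covers them, with $\mu(\zero,\zero)=1$.
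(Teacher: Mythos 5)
Your proof is correct and follows essentially the same route as the paper: the same length-preserving bijection given by adding/stripping $k\times\one$, with the emptiness of $\cC_a$ for $\ell<k$ deduced from the pivot's minimal-degree requirement. The only difference is that you explicitly state and prove the cancellation property $k\times\one+\alpha\le k\times\one+\beta\iff\alpha\le\beta$, which the paper uses implicitly when asserting that stripping $k\times\one$ from a chain in $\cC_a$ yields a chain in $\chain{\ssu k}{\psu k}$; this is a worthwhile detail to make explicit.
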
 
\begin{proof} Obviously, if $\deg(\sigma)<\deg(\pi)$, then no chain
  from $\sigma$ to $\pi$ can have $\pi$ for pivot, because the pivot
  must have minimal degree among the elements of the chain. Thus,
  $\cC_a$ is empty.

Assume now that $\deg(\sigma)\ge \deg(\pi)$. We show that there is a 
length-preserving bijection between the set of chains $\chain{\ssu k}{\psu k}$ 
and the set of chains $\cC_a$. Indeed, take any chain $C\in \chain{\ssu k}{\psu 
k}$, and create a new chain $f(C)=(k\times\one)+C$.
Then $f(C)$ is a chain from $\sigma$ to $\pi$, and since every element of 
$f(C)$ has degree at least $k$, while $\pi$ has degree exactly $k$, we see 
that $\pi$ is the pivot of~$f(C)$. Hence $f(C)\in\cC_a$. 

On the other hand, if $C'$ is any chain from $\cC_a$, we see that each 
element of $C'$ has degree at least $k$, because $\pi$ has degree $k$ and is 
the pivot of~$C'$. Thus, every element $\tau'\in C'$ is of the form 
$k\times\one+\tau$ for some $\tau$, and hence there exists a chain 
$C\in\chain{\ssu k}{\psu k}$ such that $C'=f(C)$. Since $f$ is clearly 
injective and length-preserving, we conclude that $w(\cC_a)=w(\chain{\ssu k}{\psu 
k})=\mu(\ssu k,\psu k)$, as claimed.
\end{proof} 

\begin{claim}\label{cla-c2} 
If $\deg(\sigma)<\deg(\rho)$ (so $\ell<k-1$), then $\cC_b$ is empty.
Otherwise, $w(\cC_b)=-\mu(\ssu{k-1},\psu{k})$.
\end{claim}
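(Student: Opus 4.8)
The plan is to compute $w(\cC_b)$ by matching the chains in $\cC_b$ bijectively with the chains in $\chain{\ssu{k-1}}{\psu k}$, in a way that increases the length of every chain by exactly $1$. The emptiness assertion is immediate: every chain $C\in\chain\sigma\pi$ contains $\sigma$, whose degree $\ell$ is strictly less than $k-1=\deg(\rho)$ when $\deg(\sigma)<\deg(\rho)$; since the pivot of a chain is an element of minimum degree, it cannot be $\rho$, so $\cC_b$ is empty. For the rest of the argument I would assume $\deg(\sigma)=\ell\ge k-1$, i.e.\ $\sigma_1=\dots=\sigma_{k-1}=\one$, so that $\sigma=(k-1)\times\one+\ssu{k-1}$; recall also that $\rho=(k-1)\times\one+\psu k$ and $\pi=k\times\one+\psu k$, and in particular that $|\pi|=|\rho|+1$.

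The first structural step I would carry out is to pin down the shape of a chain $C\in\cC_b$. Since $|\pi|=|\rho|+1$, no permutation has length strictly between those of $\rho$ and $\pi$, so nothing lies strictly between $\rho$ and $\pi$ in the poset; as every element of $C$ is $\le\pi$, the only element of $C$ above $\rho$ is $\pi$ itself. Hence $C=C_0\cup\{\pi\}$ with $C_0=\{\sigma=\tau_0<\dots<\tau_i=\rho\}$ a chain from $\sigma$ to $\rho$, and $L(C)=L(C_0)+1$. Moreover, since $\rho$ is the pivot of $C$, every $\tau_j\in C_0$ satisfies $\deg(\tau_j)\ge\deg(\rho)=k-1$. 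I would then verify the converse: if $C_0$ is any chain from $\sigma$ to $\rho$ all of whose elements have degree at least $k-1$, then in $C_0\cup\{\pi\}$ the minimum degree is $k-1$, it is attained by $\rho$, and the sole element above $\rho$ — namely $\pi$, of degree $k$ — does not attain it, so $\rho$ is the pivot and $C_0\cup\{\pi\}\in\cC_b$. This yields a bijection, increasing chain length by exactly $1$, between $\cC_b$ and the family $\mathfrak{D}$ of chains from $\sigma$ to $\rho$ consisting entirely of permutations of degree $\ge k-1$.

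Finally I would peel off the leading $(k-1)\times\one$. The assignment $\tau\mapsto\tau'$, where $\tau=(k-1)\times\one+\tau'$, is a length-preserving order isomorphism from the set of permutations of degree at least $k-1$ onto the set of all (possibly empty) permutations — exactly as for the analogous map $f$ in the proof of Claim~\ref{cla-c1} — and it sends $\sigma$ to $\ssu{k-1}$ and $\rho$ to $\psu k$. Hence it carries $\mathfrak{D}$ bijectively and length-preservingly onto $\chain{\ssu{k-1}}{\psu k}$. Composing the two bijections and invoking Philip Hall's theorem gives
\[
w(\cC_b)=\sum_{C\in\cC_b}(-1)^{L(C)}=\sum_{D\in\chain{\ssu{k-1}}{\psu k}}(-1)^{L(D)+1}=-\mu(\ssu{k-1},\psu k),
\]
as claimed. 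I expect the steps needing the most care to be the converse direction above — checking that placing $\pi$ atop a degree-$\ge(k-1)$ chain never creates a pivot earlier than $\rho$ — and verifying that peeling off $(k-1)\times\one$ is an order isomorphism in \emph{both} directions, so that chains really do correspond to chains; the latter rests on the elementary observation that $(k-1)\times\one+\alpha\le(k-1)\times\one+\beta$ if and only if $\alpha\le\beta$, which absorbs the bulk of the routine verification.
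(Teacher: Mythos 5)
Your proof is correct and follows essentially the same route as the paper: both establish a length-shifting bijection between $\cC_b$ and $\chain{\ssu{k-1}}{\psu k}$ obtained by prepending $(k-1)\times\one$ and adjoining $\pi$ on top. You merely make explicit two points the paper leaves as ``easy to see'' --- that nothing lies strictly between $\rho$ and $\pi$ because $|\pi|=|\rho|+1$, and that peeling off $(k-1)\times\one$ is an order isomorphism in both directions --- which is a welcome but not substantively different elaboration.
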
                               
\begin{proof}
If $\deg(\sigma)<\deg(\rho)$ then $\rho$ cannot be the pivot of any chain 
containing $\sigma$ and $\cC_b$ is empty.

Assume now that $\deg(\sigma)\ge\deg(\rho)$. We will describe a 
parity-reversing bijection $f$ between the set of chains 
$\chain{\ssu{k-1}}{\psu{k}}$ and the set of chains $\cC_b$. Take a chain 
$C\in\chain{\ssu{k-1}}{\psu{k}}$. Define a new chain $C'$ by 
$C'=((k-1)\times\one)+C$. Notice that $C'$ is a chain from 
$\sigma$ to $\rho$ whose pivot is~$\rho$ and whose length is equal to the 
length of~$C$. Define the chain $f(C)$ by $f(C)=C'\cup\{\pi\}$. Then the 
chain $f(C)$ belongs to $\cC_b$ and has length $L(C)+1$. It is again easy 
to see that $f$ is a bijection between $\chain{\ssu{k-1}}{\psu{k}}$ and 
$\cC_b$, which shows that
\[
w(\cC_b)=-w(\chain{\ssu{k-1}}{\psu{k}})=-\mu(\ssu{k-1},\psu k),
\]                                                            
as claimed.
\end{proof}

\begin{claim}\label{cla-c3}
$w(\cC_c)=0$.
\end{claim}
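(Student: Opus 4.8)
The plan is to exhibit a parity-reversing, fixed-point-free involution $f$ on the set $\cC_c$: once we have it, the chains of $\cC_c$ cancel in pairs $\{C,f(C)\}$ because $w(C)+w(f(C))=0$, and so $w(\cC_c)=0$. To define $f$, take a chain $C=\{\tau_0<\tau_1<\dotsb<\tau_p\}$ in $\cC_c$ and let $\tau_i$ be its pivot, $d=\deg(\tau_i)$; note that $\one+\tau_i$ has degree $d+1$ and satisfies $\tau_i<\one+\tau_i$. I would set $f(C)=C\setminus\{\one+\tau_i\}$ if $\one+\tau_i\in C$, and $f(C)=C\cup\{\one+\tau_i\}$ otherwise. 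The one non-routine ingredient is the following fact, which I would prove separately: \emph{if $\alpha\le\beta$ and $\deg(\alpha)<\deg(\beta)$, then $\one+\alpha\le\beta$.}

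Granting this fact, here is why $f$ works. Because $C\in\cC_c$, the pivot $\tau_i$ is neither $\pi$ nor $\rho$, and in particular $\tau_i\neq\tau_p$, so $i<p$. For each $j>i$ we have $\deg(\tau_i)<\deg(\tau_j)$ (the defining property of the pivot) and $\tau_i\le\tau_j$, so the fact gives $\one+\tau_i\le\tau_j$; combined with $\one+\tau_i>\tau_i\ge\tau_j$ for $j\le i$, we see that $\one+\tau_i$ is comparable to every element of $C$, and since $\one+\tau_i\le\tau_{i+1}$ it either equals $\tau_{i+1}$ or can be slotted strictly between $\tau_i$ and $\tau_{i+1}$. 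Hence $f(C)$ is again a chain, and it still runs from $\sigma$ to $\pi$: the bottom is safe because $\sigma\le\tau_i<\one+\tau_i$, and the top is safe because $\one+\tau_i=\pi=\one+\rho$ would force $\tau_i=\rho$, contrary to $C\in\cC_c$. Inserting or deleting a single element changes the chain length by exactly $1$, so $f$ reverses parity and has no fixed point. Finally, $\deg(\one+\tau_i)=d+1>d$, so neither the set of minimum-degree elements of the chain nor the largest among them is affected by inserting or deleting $\one+\tau_i$; thus $\tau_i$ is still the pivot of $f(C)$, whence $f(C)\in\cC_c$ and $f(f(C))=C$. So $f$ is the desired involution.

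It remains to prove the fact. Write $\alpha=d\times\one+\alpha'$ and $\beta=e\times\one+\beta'$ with $d=\deg(\alpha)<e=\deg(\beta)$, where $\alpha'$ and $\beta'$ are each either empty or of degree~$0$; it is enough to show $\alpha'\le\beta'$, since then $\one+\alpha=(d+1)\times\one+\alpha'\le e\times\one+\beta'=\beta$ because $d+1\le e$. Fix an occurrence of $\alpha$ in $\beta$ and let $t\ge0$ be the number of letters of $\beta$ that represent the suffix $\alpha'$ of $\alpha$ and lie at positions at most $e$, i.e.\ inside the initial run $1,2,\dotsc,e$ of $\beta$. Those $t$ letters are then the $t$ smallest among all letters representing $\alpha'$, and they occupy its leftmost positions, so $\alpha'$ would begin with $t\times\one$; since $\deg(\alpha')=0$ (or $\alpha'=\zero$) this forces $t=0$. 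Hence every letter representing $\alpha'$ lies at a position greater than $e$, that is, within the copy of $\beta'$ in $\beta$, so $\alpha'\le\beta'$, as needed. I expect this last letter-chasing step --- showing that the letters of $\beta$ representing $\alpha'$ cannot intrude into the increasing prefix of $\beta$ --- to be the main obstacle; everything else is routine bookkeeping with degrees and pivots.
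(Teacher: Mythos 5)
Your proof is correct and follows essentially the same route as the paper: both arguments cancel the chains of $\cC_c$ via the parity-reversing involution that toggles the element $\one+\tau$ (for $\tau$ the pivot) in or out of the chain, checking that the pivot, and hence membership in $\cC_c$, is preserved. The only difference is that you spell out a careful proof of the step the paper dismisses as ``easily deduced'' (that $\alpha\le\beta$ and $\deg(\alpha)<\deg(\beta)$ imply $\one+\alpha\le\beta$), and your letter-chasing argument for it is sound.
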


\begin{proof} We construct a parity-reversing involution
  $f\colon\cC_c\to\cC_c$. Let $C$ be a chain from $\cC_c$, let $\tau$
  be its pivot, and let $\tau'$ be the successor of $\tau$ in~$C$. By
  definition of $\cC_c$, $\tau$ is not equal to $\pi$, so $\tau'$ is
  well defined. From the definition of a pivot, we know that
  $\deg(\tau)<\deg(\tau')$.  Let us distinguish two cases:
\begin{enumerate}
\item
If $\tau'=\one+\tau$, we define a new chain 
$f(C)$ by $f(C)=C\setminus\{\tau'\}$. Note that in this case, we know that 
$\tau'$ is different from $\pi$, because otherwise $\tau$ would be equal to 
$\rho$, contradicting the definition of $\cC_c$. Thus, $f(C)\in\cC_c$. Note 
that $\tau$ is a pivot of~$f(C)$.
\item If $\tau'\neq \one+\tau$, then we easily deduce that
  $\tau'>\one+\tau$ (recall that $\deg(\tau')>\deg(\tau)$). We then
  define a new chain $f(C)=C\cup\{\one+\tau\}$, in which the new
  element $\one+\tau$ is inserted between $\tau$ and $\tau'$.
\end{enumerate} 
The mapping $f$ is easily seen to be an involution on the set $\cC_c$ which 
preserves the pivot and maps odd-length chains to even-length chains and vice 
versa. This shows that $w(\cC_c)=0$, as claimed.
\end{proof} 

From these claims, Proposition~\ref{pro-first} easily follows. Indeed, 
Claim~\ref{cla-c3} implies that $\mu(\sigma,\pi)=w(\cC_a)+w(\cC_b)$. From 
Claims~\ref{cla-c1} and \ref{cla-c2} we deduce the values of 
$\mu(\sigma,\pi)$:
\begin{itemize}   
\item
If $k-1>\ell$ then both $\cC_a$ and $\cC_b$ are empty and $\mu(\sigma,\pi)=0$.
\item 
If $k-1=\ell$ then $\cC_a$ is empty and 
$\mu(\sigma,\pi)=w(\cC_b)=-\mu(\ssu{k-1},\psu{k})$.
\item 
If $k-1<\ell$, then $\mu(\sigma,\pi)=w(\cC_a)+w(\cC_b)=\mu(\ssu k, \psu 
k)-\mu(\ssu{k-1},\psu{k})$.
\end{itemize}
This completes the proof of Proposition~\ref{pro-first}.    

\subsection{Proof of the second recurrence}

It remains to prove Proposition~\ref{pro-second}. The proof is again based on 
cancellation among the chains from $\sigma$ to~$\pi$. Before stating the proof, we 
need more terminology and several lemmas.

Let $\alpha$, $\beta$ and $\rho$ be any permutations. We say that $\alpha$ is 
a \emph{$\rho$-tight subpermutation of $\beta$}, denoted by 
$\alpha\tight{\rho}\beta$, if $\alpha<\beta$ but $\rho+\alpha$ is not 
contained in~$\beta$. We say that a chain 
$\{\alpha_0<\alpha_1<\dotsb<\alpha_k\}$ is \emph{$\rho$-tight} if 
$\alpha_{i-1}\tight{\rho}\alpha_i$ for every $i=1,\dotsc,k$. Let 
$\tch{\alpha}{\beta}{\rho}$ be the set of all the $\rho$-tight chains from 
$\alpha$ to~$\beta$.   

The following simple properties of $\rho$-tightness follow directly from the 
definitions, and they are presented without proof. 

\begin{lemma}\label{obs-tight}
For arbitrary permutations $\alpha$, $\beta$, $\gamma$ and $\rho$,
we have $\alpha+\gamma\tight{\rho}\beta+\gamma$ if and only if 
$\alpha\tight{\rho}\beta$.
\end{lemma}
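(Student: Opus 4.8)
The plan is to isolate the only substantive ingredient — a cancellation property of the direct sum with respect to pattern containment — and then obtain the lemma by a short chain of equivalences.

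First I would establish the auxiliary fact that for arbitrary permutations $\delta$, $\epsilon$, $\gamma$ one has $\delta+\gamma\le\epsilon+\gamma$ if and only if $\delta\le\epsilon$ (vacuously true when $\gamma=\zero$). The ``if'' direction is immediate: combine an embedding of $\delta$ into $\epsilon$ with the identity embedding of the trailing copy of $\gamma$. For the ``only if'' direction, fix an embedding $\phi$ of $\delta+\gamma$ into $\epsilon+\gamma$ and let $S$ and $T$ be the $\phi$-images of the positions of the leading block (a copy of $\delta$) and of the trailing block (a copy of $\gamma$) of $\delta+\gamma$, respectively. Since $\phi$ respects the order of positions, every position of $S$ precedes every position of $T$; hence if some position of $S$ lay inside the trailing $\gamma$-block of $\epsilon+\gamma$, then all $|\gamma|$ positions of $T$ would lie in that block, but the block has only $|\gamma|$ positions, so $T$ would be the entire block, contradicting $S\cap T=\zero$. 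Therefore $S$ lies in the leading $\epsilon$-block of $\epsilon+\gamma$, and the restriction of $\phi$ to the leading block of $\delta+\gamma$ witnesses $\delta\le\epsilon$.

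Since moreover $\delta+\gamma=\epsilon+\gamma$ holds precisely when $\delta=\epsilon$ (equal lengths force $|\delta|=|\epsilon|$, and then the first $|\delta|$ letters determine each summand), the auxiliary fact sharpens to $\delta+\gamma<\epsilon+\gamma$ if and only if $\delta<\epsilon$. The lemma is now a formal consequence. Unwinding the definition of the relation $\tight{\rho}$, and using associativity of the direct sum to write $\rho+(\alpha+\gamma)=(\rho+\alpha)+\gamma$, the statement $\alpha+\gamma\tight{\rho}\beta+\gamma$ reads ``$\alpha+\gamma<\beta+\gamma$ and $(\rho+\alpha)+\gamma\not\le\beta+\gamma$''; applying the auxiliary equivalences (with $\delta=\alpha$, $\epsilon=\beta$ in the first conjunct, and $\delta=\rho+\alpha$, $\epsilon=\beta$ in the second) this becomes ``$\alpha<\beta$ and $\rho+\alpha\not\le\beta$'', i.e.\ $\alpha\tight{\rho}\beta$. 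The only place that requires genuine care is the case analysis in the ``only if'' direction of the auxiliary fact — ruling out that an embedding of $\delta+\gamma$ into $\epsilon+\gamma$ realizes the leading copy of $\delta$ partly inside the trailing copy of $\gamma$ — and, as indicated, this is settled purely by counting positions within the $\gamma$-block.
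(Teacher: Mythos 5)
Your proof is correct: the cancellation property $\delta+\gamma\le\epsilon+\gamma\iff\delta\le\epsilon$ (proved by the position-counting argument forcing the image of the leading block into the leading block) is exactly the substance behind this lemma, and the reduction of the $\tight{\rho}$ statement to it via associativity of $+$ is sound, including the degenerate cases $\gamma=\zero$ and $\rho=\zero$. The paper states this lemma without proof as following directly from the definitions, and your argument is precisely the intended direct verification, just written out in full.
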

 
\begin{lemma}\label{obs-tight2}
If $\rho$ is a nonempty indecomposable permutation, and if $\alpha$ and 
$\beta$ are arbitrary permutations, then $\rho+\alpha\tight{\one}\rho+\beta$ 
if and only if $\alpha\tight{\rho}\beta$.
\end{lemma}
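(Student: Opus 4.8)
The plan is to prove both implications by tracking where the positions of an occurrence of a direct sum lie relative to the boundary between the two summands, with the indecomposability of $\rho$ entering through a single structural observation.

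First I would record the routine facts to be used repeatedly: if $\gamma\le\delta$ then $\gamma+\epsilon\le\delta+\epsilon$ and $\epsilon+\gamma\le\epsilon+\delta$, with strictness when $|\gamma|<|\delta|$; and since $\rho$ is nonempty we have $\one\le\rho$, hence $\one+\tau\le\rho+\tau$ for every~$\tau$. The key observation is a \emph{separation property}: if $\rho$ is indecomposable and $\rho$ occurs in $\rho+\beta$, then this occurrence either lies entirely among the first $|\rho|$ positions of $\rho+\beta$ --- in which case it occupies all of them and is therefore the initial block~$\rho$ itself --- or lies entirely among the last $|\beta|$ positions, in which case $\rho\le\beta$. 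The proof is the standard one: an occurrence using positions on both sides of the boundary would have all of its letters from the first block below all of its letters from the second block, and would thus exhibit $\rho$ as a direct sum of two nonempty permutations, contradicting indecomposability.

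For the forward direction, assume $\rho+\alpha\tight{\one}\rho+\beta$, that is, $\rho+\alpha<\rho+\beta$ and $\one+\rho+\alpha\not\le\rho+\beta$. I would first note that $\rho+\alpha\not\le\beta$, since $\rho+\alpha\le\beta$ would give $\one+\rho+\alpha\le\one+\beta\le\rho+\beta$, a contradiction. To obtain $\alpha<\beta$, fix an occurrence of $\rho+\alpha$ in $\rho+\beta$ and split its positions into the block realizing $\rho$ and the block realizing $\alpha$. Applying the separation property to the first block: it cannot lie among the last $|\beta|$ positions, for then the $\alpha$-block, sitting to its right, would too, forcing $\rho+\alpha\le\beta$; hence it is exactly the initial block of $\rho+\beta$, which pushes the $\alpha$-block into the last $|\beta|$ positions and yields $\alpha\le\beta$, with strictness following from $|\alpha|<|\beta|$.

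For the converse, assume $\alpha<\beta$ and $\rho+\alpha\not\le\beta$. Then $\rho+\alpha<\rho+\beta$ is immediate. Supposing $\one+\rho+\alpha\le\rho+\beta$, fix such an occurrence --- a first position realizing the~$\one$, followed by an occurrence of $\rho+\alpha$ all of whose letters are strictly larger --- and again split the $\rho+\alpha$-part into its two blocks. The $\rho$-block cannot be the initial block of $\rho+\beta$, since then the letter realizing the~$\one$ would have to lie before position~$1$; so by the separation property it lies among the last $|\beta|$ positions, dragging the $\alpha$-block along and giving $\rho+\alpha\le\beta$, contrary to hypothesis. Hence $\one+\rho+\alpha\not\le\rho+\beta$, as desired. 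The only real content is the separation property; the remainder is careful bookkeeping with the block boundary, and the degenerate cases (with $\alpha$ or $\beta$ empty) are subsumed by the same argument.
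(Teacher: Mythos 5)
Your proof is correct. The paper states this lemma (together with Lemma~\ref{obs-tight}) without proof, asserting that it follows directly from the definitions, so there is no argument in the paper to compare against; the proof you supply --- isolating the separation property that an occurrence of an indecomposable $\rho$ in $\rho+\beta$ must lie entirely in one of the two blocks, and then doing the bookkeeping for both implications --- is precisely the standard argument the authors are implicitly invoking, and you correctly identify indecomposability of $\rho$ as the only place where a nontrivial observation is needed.
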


The next lemma shows the relevance of $\rho$-tightness for the computation
of~$\mu$.

\begin{lemma}\label{lem-tight} Let $\beta$ be a permutation with decomposition 
$\beta=\beta_1+\beta_2+\dotsb+\beta_p$.  Let $\rho$ be a 
nonempty indecomposable permutation, and let $\alpha$ be any permutation.
\begin{enumerate}
\item
If $\rho\neq\beta_1$, then $\mu(\alpha,\beta)=w(\tch{\alpha}{\beta}{\rho})$.
\item
If $\rho=\beta_1$, then 
$\mu(\alpha,\beta)=w(\tch{\alpha}{\beta}{\rho})-w(\tch{\alpha}{\bsu}{\rho})$.
\end{enumerate}
\end{lemma}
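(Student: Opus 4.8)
The plan is to prove Lemma~\ref{lem-tight} by setting up a parity-reversing (sign-reversing) involution on the set of chains from $\alpha$ to $\beta$ that are \emph{not} $\rho$-tight, analogous in spirit to the involution used in Claim~\ref{cla-c3}. Recall $\mu(\alpha,\beta)=w(\chain{\alpha}{\beta})$ by Philip Hall's Theorem, so it suffices to understand $w(\chain{\alpha}{\beta}\setminus\tch{\alpha}{\beta}{\rho})$, the weight of the non-tight chains. Given a non-$\rho$-tight chain $C=\{\alpha=\tau_0<\tau_1<\dots<\tau_q=\beta\}$, let $i$ be the \emph{least} index for which $\tau_{i-1}\not\tight{\rho}\tau_i$, i.e.\ $\rho+\tau_{i-1}\le\tau_i$. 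First I would show that this forces a canonical way to either delete an element or insert one: since $\rho+\tau_{i-1}\le\tau_i$, either $\tau_i$ is itself $\rho+\tau_{i-1}$ (or differs from it in a controlled way), in which case we remove $\tau_i$ from the chain, or $\tau_i\neq\rho+\tau_{i-1}$ but $\rho+\tau_{i-1}$ sits strictly between $\tau_{i-1}$ and $\tau_i$, in which case we insert $\rho+\tau_{i-1}$. The key point to verify is that this toggle does not disturb the $\rho$-tight initial segment $\tau_0\tight{\rho}\tau_1\tight{\rho}\dots\tight{\rho}\tau_{i-1}$, so that the index $i$ (the ``first violation'') is preserved, making $f$ an involution; and that $f$ changes the chain length by exactly one, hence reverses weight.

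The subtlety — and the reason the lemma splits into two cases — is the behaviour at the top of the chain. The deletion move removes $\tau_i$, but this is only legal if $\tau_i\neq\beta$; and insertion of $\rho+\tau_{i-1}$ is only legal if $\rho+\tau_{i-1}\le\beta$ and $\rho+\tau_{i-1}\neq\beta$ (so that it genuinely lands strictly inside the chain). So the involution fails precisely on chains whose first violation occurs at the last step, i.e.\ $\rho+\tau_{q-1}\le\beta$ with $\tau_{q-1}$ maximal. I would argue that because $\rho$ is nonempty and indecomposable and $\beta=\beta_1+\dots+\beta_p$, the relation $\rho+\gamma\le\beta$ for $\gamma<\beta$ can be analysed via where the copy of $\rho$ lands in the decomposition of $\beta$: the indecomposability of $\rho$ forces the embedded copy of $\rho$ to lie inside a single block $\beta_j$ together with (possibly) a contribution from the blocks to its right, but in fact—combined with $\gamma$ having to fill the rest—this pins down the fixed points. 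When $\rho\neq\beta_1$, a careful case check shows there are no such fixed points at all, because a copy of $\rho$ followed by a copy of $\gamma=\tau_{q-1}$ that together exhaust $\beta$ would have to start at the very beginning of $\beta$, forcing $\beta_1=\rho$; hence $w(\chain{\alpha}{\beta}\setminus\tch{\alpha}{\beta}{\rho})=0$ and part~(1) follows. When $\rho=\beta_1$, the fixed points are exactly the chains of the form $C'\cup\{\beta\}$ where $C'$ is a chain from $\alpha$ to $\bsu=\beta_2+\dots+\beta_p$ that is $\rho$-tight — i.e.\ adding $\beta$ on top of a $\rho$-tight chain into the suffix $\bsu$. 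Each such fixed point has weight $-w(C')$ (one extra step), so the fixed points contribute $-w(\tch{\alpha}{\bsu}{\rho})$, giving part~(2).

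To make the ``fixed point'' analysis rigorous I would lean on Lemmas~\ref{obs-tight} and~\ref{obs-tight2}: the condition $\rho+\tau_{q-1}\le\beta$ combined with $\rho=\beta_1$ and the maximality of $\tau_{q-1}$ among elements $<\beta$ with this property should be translated, via Lemma~\ref{obs-tight2} ($\rho+\tau_{q-1}\tight{\one}\rho+\beta'$ iff $\tau_{q-1}\tight{\rho}\beta'$ where $\bsu=\beta'$-ish), into the statement that $\tau_{q-1}\le\bsu$ and the truncated chain $C\setminus\{\beta\}$ is a $\rho$-tight chain ending at $\tau_{q-1}\le\bsu$, and conversely. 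One has to be a little careful that $\tau_{q-1}$ actually equals a point of $[\alpha,\bsu]$ rather than merely being $\le\bsu$; the maximality built into the ``first violation is at the last step'' condition is what forces $\tau_{q-1}=\bsu$ is \emph{not} required—rather the whole truncated chain lives in $[\alpha,\bsu]$ and is $\rho$-tight, which is exactly an element of $\tch{\alpha}{\bsu}{\rho}$ after checking the top step $\tau_{q-1}\tight{\rho}\bsu$ or handling $\tau_{q-1}=\bsu$ as the length-preserving degenerate case.

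The main obstacle I anticipate is exactly this boundary bookkeeping: proving cleanly that the deletion/insertion toggle is well-defined, self-inverse, and length-changing-by-one on \emph{all} non-tight, non-fixed chains, while simultaneously identifying the fixed-point set precisely and with the correct sign. In particular one must rule out ``accidental'' fixed points where insertion is blocked for reasons other than hitting $\beta$ (it should not happen, since if $\rho+\tau_{i-1}<\tau_i$ strictly then insertion is always legal), and one must confirm the first-violation index is genuinely invariant under $f$ — deleting $\tau_i$ could in principle create a new earlier violation between $\tau_{i-1}$ and $\tau_{i+1}$, so I would need the observation that $\tau_{i-1}\tight{\rho}\tau_{i+1}$ still fails or holds consistently, which follows because $\rho+\tau_{i-1}\le\tau_i<\tau_{i+1}$ means the deleted chain still has its first violation at position $i$ (now pointing at $\tau_{i+1}$). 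Once these verifications are in place, the two cases of the lemma drop out immediately from the value of the fixed-point weight.
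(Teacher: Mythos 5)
Your proposal is correct and follows essentially the same route as the paper: the same insert/delete toggle at the first $\rho$-tightness violation, the same use of indecomposability of $\rho$ to show $\rho+\tau_{i-1}=\beta$ forces $\beta_1=\rho$, and the fixed points you identify in the case $\rho=\beta_1$ are exactly the paper's ``almost $\rho$-tight'' chains $C'\cup\{\beta\}$ with $C'\in\tch{\alpha}{\bsu}{\rho}$. The only wobble is in your third paragraph: since insertion of $\rho+\tau_{q-1}$ is legal whenever $\rho+\tau_{q-1}<\beta$ strictly, the involution fails only when $\rho+\tau_{q-1}=\beta$ exactly, which (as $\rho=\beta_1$ is indecomposable) does force $\tau_{q-1}=\bsu$ --- so the case analysis you were hedging about does not arise.
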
 
\begin{proof}
Let us first deal with the first claim of the lemma. Let us define 
$\widehat\cC=\chain{\alpha}{\beta}\setminus\tch{\alpha}{\beta}{\rho}$ to be the set 
of all the chains from $\alpha$ to $\beta$ that are not $\rho$-tight. The 
first part of the lemma is equivalent to saying that $w(\widehat\cC)=0$. To prove 
this, we find a parity-reversing involution $f$ on the set~$\widehat\cC$.

Consider a chain 
$C=\{\alpha=\alpha_0<\alpha_1<\dotsb<\alpha_q=\beta\}\in\widehat\cC$. Since $C$ is 
not $\rho$-tight, there is an index $i$ such that $\rho+\alpha_i\le 
\alpha_{i+1}$. Fix the smallest such value of~$i$. We distinguish two cases: 
either $\rho+\alpha_i<\alpha_{i+1}$, or $\rho+\alpha_i=\alpha_{i+1}$. 

If $\rho+\alpha_i<\alpha_{i+1}$, define a new chain 
\[
f(C)=C\cup\{\rho+\alpha_i\}= 
\{\alpha=\alpha_0<\alpha_1<\dotsb<\alpha_i<\rho+\alpha_i<\alpha_{i+1}<\dotsb<\alpha_q=\beta\}.
\]
On the other hand, if $\rho+\alpha_i=\alpha_{i+1}$, define a new chain
\[
f(C)=C\setminus\{\rho+\alpha_i\}= 
\{\alpha=\alpha_0<\alpha_1<\dotsb<\alpha_i<\alpha_{i+2}<\dotsb<\alpha_q=\beta\}.
\]   
Note that, since we assume that $\rho\neq\beta_1$ and that $\rho$ is
indecomposable, we know that $\rho+\alpha_i$ is not equal to~$\beta$.
Moreover, in the chain $f(C)$ the element $\alpha_i$ is not a
$\rho$-tight subpermutation of its successor in the chain. Thus, we
see that $f(C)$ is a chain from~$\widehat\cC$. It is easy to see that
$f$ is an involution, and that it reverses the parity of the length of
the chain, showing that $w(\widehat\cC)=0$.  This proves the first
part of the lemma.
                                                                    
Let us prove the second part. Assume that $\rho=\beta_1$, that is, 
$\beta=\rho+\bsu$. Consider a chain $C$ from $\alpha$ to $\beta$, and let 
$\alpha_0,\alpha_1,\dotsc,\alpha_q$ be the elements of~$C$. We say that the 
chain $C$ is \emph{almost $\rho$-tight} if its second largest element 
$\alpha_{q-1}$ is equal to $\bsu$ and if $\alpha_{i-1}\tight{\rho}\alpha_i$ 
for each~$i\le q-1$. Note that an almost $\rho$-tight chain is never 
$\rho$-tight, because $\bsu$ is not a $\rho$-tight subpermutation of 
$\beta=\rho+\bsu$.
          
We partition the set $\chain{\alpha}{\beta}$ into three disjoint sets 
$\cC_a$, $\cC_b$, and $\cC_c$, where $\cC_a$ is the set 
$\tch{\alpha}{\beta}{\rho}$ of $\rho$-tight chains, $\cC_b$ is the set of 
almost $\rho$-tight chains, and $\cC_c$ contains the chains that  neither 
$\rho$-tight nor almost $\rho$-tight. 

Consider again the mapping $f$ defined in the proof of the first part of the 
lemma. This mapping, restricted to the set $\cC_c$, is easily seen to be a 
parity-reversing involution on~$\cC_c$, which shows that~$w(\cC_c)=0$. This 
means that $\mu(\alpha,\beta)=w(\cC_a)+w(\cC_b)$. 

Furthermore, note that an almost $\rho$-tight chain from $\alpha$ to $\beta$ 
consists of a $\rho$-tight chain from $\alpha$ to $\bsu$ followed by~$\beta$, 
and conversely, any $\rho$-tight chain from $\alpha$ to $\bsu$ can be extended to an 
almost $\rho$-tight chain from $\alpha$ to $\beta$ by adding the 
element~$\beta$. Thus, we see that $w(\cC_b)=-w(\tch{\alpha}{\bsu}{\rho})$. 
This implies that 
$\mu(\alpha,\beta)=w(\tch{\alpha}{\beta}{\rho})-w(\tch{\alpha}{\bsu}{\rho})$, 
as claimed.
\end{proof}
         
The next lemma is an easy consequence of Lemma~\ref{lem-tight}.

\begin{lemma}\label{lem-tight2}
Let $\beta$ be a permutation with decomposition 
$\beta=\beta_1+\beta_2+\dotsb+\beta_p$. Let $\rho$ be an indecomposable 
permutation, and let $\alpha$ be any permutation. Let $q\ge 0$ be the largest 
integer such that the blocks $\beta_1,\beta_2,\dotsc,\beta_q$ are all equal 
to~$\rho$. Then
\[
w(\tch{\alpha}{\beta}{\rho})=\sum_{i=0}^{q} \mu(\alpha,\suf{\beta}{i}).
\]
\end{lemma}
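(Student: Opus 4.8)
The plan is to prove this by a short induction on $q$, the length of the maximal initial run of blocks of $\beta$ equal to $\rho$, feeding on the two cases of Lemma~\ref{lem-tight}. It will be convenient to let the statement also cover $\beta=\zero$ (taking $q=0$), where it asserts $w(\tch{\alpha}{\zero}{\rho})=\mu(\alpha,\zero)$; this holds since $\tch{\alpha}{\zero}{\rho}$ consists of the single trivial chain of weight $1$ when $\alpha=\zero$ and is empty otherwise, matching $\mu(\zero,\zero)=1$ and $\mu(\alpha,\zero)=0$ for $\alpha\neq\zero$.

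For the base case $q=0$ with $\beta\neq\zero$ we have $\beta_1\neq\rho$, so part~(1) of Lemma~\ref{lem-tight} gives $w(\tch{\alpha}{\beta}{\rho})=\mu(\alpha,\beta)=\mu(\alpha,\suf{\beta}{0})$, which is exactly the claimed sum (a single term). For the inductive step $q\ge1$ the first block of $\beta$ is $\rho$, so $\beta=\rho+\bsu$ with $\bsu=\suf{\beta}{1}$, and part~(2) of Lemma~\ref{lem-tight} rearranges to $w(\tch{\alpha}{\beta}{\rho})=\mu(\alpha,\beta)+w(\tch{\alpha}{\bsu}{\rho})$. The decomposition of $\bsu$ is $\beta_2+\dotsb+\beta_p$, so its maximal initial run of blocks equal to $\rho$ has length $q-1$, and the $i$-th suffix of $\bsu$ is $\suf{\beta}{i+1}$. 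Applying the induction hypothesis to $\bsu$ therefore gives
\[
w(\tch{\alpha}{\bsu}{\rho})=\sum_{i=0}^{q-1}\mu(\alpha,\suf{\beta}{i+1})=\sum_{j=1}^{q}\mu(\alpha,\suf{\beta}{j}),
\]
and adding $\mu(\alpha,\beta)=\mu(\alpha,\suf{\beta}{0})$ finishes the induction.

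I do not expect any real obstacle here; Lemma~\ref{lem-tight} is exactly the tool the statement is built on. The two small points to keep straight are the re-indexing of the inner sum --- matching the $i$-th suffix of $\bsu$ with $\suf{\beta}{i+1}$ --- and the degenerate case $\bsu=\zero$ (which occurs precisely when $q=1$ and $p=1$, i.e.\ $\beta=\rho$), where one invokes the $\beta=\zero$ instance of the statement in place of Lemma~\ref{lem-tight}. Both are dispatched by the conventions fixed at the outset, so the whole argument is essentially a one-line consequence of Lemma~\ref{lem-tight} once the suffix bookkeeping is lined up.
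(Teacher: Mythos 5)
Your proof is correct and follows essentially the same route as the paper: induction on $q$, with the base case handled by part (1) of Lemma~\ref{lem-tight} and the inductive step by rearranging part (2) and re-indexing the suffix sum. The only difference is that you explicitly cover the degenerate case $\bsu=\zero$ (when $\beta=\rho$), which the paper leaves implicit; this is a welcome bit of care but not a different argument.
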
  
\begin{proof} Proceed by induction on~$q$. If $q=0$, the claim reduces to the 
identity $w(\tch{\alpha}{\beta}{\rho})=\mu(\alpha,\beta)$, which follows from 
the first part of Lemma~\ref{lem-tight}. Suppose that $q>0$. Then the second 
part of Lemma~\ref{lem-tight} applies and we get that  
\[
\mu(\alpha,\beta)=w(\tch{\alpha}{\beta}{\rho})-w(\tch{\alpha}{\bsu}{\rho}),
\]                                                                         
which is equivalent to 
\begin{equation}
w(\tch{\alpha}{\beta}{\rho})=\mu(\alpha,\beta)+w(\tch{\alpha}{\bsu}{\rho}).\label{eq-tight2}
\end{equation}                                                                         
By induction, we know that 
\[
w(\tch{\alpha}{\bsu}{\rho})=\sum_{i=0}^{q-1} \mu(\alpha,\suf{(\bsu)}{i})=\sum_{i=1}^{q} 
\mu(\alpha,\suf{\beta}{i}).
\]                         
Combining this with~\eqref{eq-tight2}, we obtain the desired identity.
\end{proof}                                                           

Before we proceed towards the proof of Proposition~\ref{pro-second}, we need 
to introduce more definitions. Let $\beta$ be a permutation with 
decomposition $\beta_1+\dotsb+\beta_p$ into indecomposable blocks, let 
$\alpha$ be any permutation. Let $C$ be a chain of permutations, with 
elements $\alpha=\alpha_0<\alpha_1<\dotsb<\alpha_q=\beta$. We express each 
element $\alpha_i$ of the chain as a sum of two permutations, called 
\emph{head} and \emph{tail}, denoted respectively as $h_i(C)$ and $t_i(C)$, 
with $\alpha_i=h_i(C)+t_i(C)$. The head and tail are defined inductively as 
follows: for $i=q$, we have $\alpha_i=\alpha_q=\beta$ and we define
$h_q(C)=\beta_1$ and $t_q(C)=\bsu$. 

Suppose now that the head and tail of $\alpha_i$ have been already defined, 
and let us define head and tail of $\alpha_{i-1}$. Let us put 
$\gamma=\alpha_{i-1}$, and assume that $\gamma$ has decomposition 
$\gamma_1+\gamma_2+\dotsb+\gamma_r$ into indecomposable blocks. Let $j$ be 
the smallest integer such that $\suf{\gamma}{j}\le t_i(C)$. It then follows 
that $\pre{\gamma}{j}\le h_i(C)$. We define $h_{i-1}(C)=\pre{\gamma}{j}$ and 
$t_{i-1}(C)=\suf{\gamma}{j}$. In other words, the tail of $\alpha_{i-1}$ is 
its longest suffix that is contained in the tail of~$\alpha_i$.

If the chain $C$ is clear from the context, we write $h_i$ and $t_i$ instead 
of $h_i(C)$ and $t_i(C)$. Note that $h_0\le h_1\le\dotsb\le h_q$ and $t_0\le 
t_1\le \dotsb\le t_q$.

We say that the chain $C$ of length $q$ is \emph{split} if there is an index 
$s\in\{0,\dotsc,q\}$ such that $t_0=t_1=\dotsb=t_s$ and 
$h_s=h_{s+1}=\dotsb=h_q$. Such an index $s$ is then necessarily unique. 
The next lemma demonstrates the relevance of these notions.

\begin{lemma}\label{lem-split}
Let $\beta$ be a permutation with decomposition 
$\beta_1+\beta_2+\dotsb+\beta_p$ such that $\beta_1\neq\one$. Let 
$\alpha$ be an arbitrary permutation. Let $\cC^{*}$ be the set of all the 
chains from $\chain{\alpha}{\beta}$ which are split and $\one$-tight. 
Then $\mu(\alpha,\beta)=w(\cC^{*})$.
\end{lemma}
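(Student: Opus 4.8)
The plan is to start from the first part of Lemma~\ref{lem-tight}, which already gives $\mu(\alpha,\beta)=w(\tch{\alpha}{\beta}{\one})$ since $\beta_1\neq\one$, and then cut down the set of $\one$-tight chains to the split ones by a parity-reversing involution on the non-split $\one$-tight chains. So the real content is: the set $\mathcal{N}$ of $\one$-tight chains from $\alpha$ to $\beta$ that are \emph{not} split satisfies $w(\mathcal{N})=0$. First I would record the basic monotonicity facts about heads and tails along a chain (already stated in the excerpt: $h_0\le\dots\le h_q$ and $t_0\le\dots\le t_q$), and observe that in a $\one$-tight chain each step $\alpha_{i-1}\tight{\one}\alpha_i$ forces $\one+\alpha_{i-1}\not\le\alpha_i$, which is exactly the statement that the head cannot ``advance by an extra block of size one'' — this is what ties $\one$-tightness to the head/tail bookkeeping and will make the involution well defined.

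Next I would identify, for a non-split $\one$-tight chain $C$, a canonical place to toggle. Since $C$ is not split, for every index $s$ we have either $t_s\neq t_0$ (equivalently $s$ is past the point where the tail first grows) or $h_s\neq h_q$; let $s_0$ be the largest index with $t_{s_0}=t_0$ and $t_0$ still below $h_q$'s... more precisely, let $s_0$ be the smallest index at which the head stabilizes is too late and the tail has already moved, i.e. the index witnessing non-splitness. At that index the pair $(h_{s_0},t_{s_0})$ sits strictly between $(\beta_1$-head of something$)$; the natural move is to insert or delete the permutation $h_{s_0}+t_{s_0-1}$ (or symmetrically $h_{s_0+1}+t_{s_0}$) as a new chain element between $\alpha_{s_0-1}$ and $\alpha_{s_0}$. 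I would check that this intermediate permutation is a genuine pattern strictly between its neighbours, that $\one$-tightness is preserved (using that $\beta_1\neq\one$ to rule out the degenerate case where the inserted element equals $\beta$), and that performing the move a second time returns the original chain — so $f$ is an involution reversing chain length, giving $w(\mathcal N)=0$ and hence $\mu(\alpha,\beta)=w(\cC^{*})$.

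I expect the main obstacle to be the precise definition of the toggling index $s_0$ and the verification that $f$ is a well-defined involution on $\mathcal{N}$ — in particular, that after inserting or deleting the element at $s_0$ the value of $s_0$ recomputed on the image chain is the same, so that $f(f(C))=C$. One has to be careful that the head/tail of the \emph{other} elements of the chain are unchanged by the local modification (they are, because heads and tails are defined by a downward recursion from $\beta$, and the inserted/deleted element lies below $\alpha_{s_0}$, whose head and tail are untouched), and that the modified chain is still non-split with the same witnessing index. A secondary technical point is handling the boundary cases where $s_0=0$ or where $\alpha_{s_0-1}=\alpha$ itself; these should be absorbed by allowing the head or tail of $\alpha$ to be empty, as the paper's conventions permit. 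Once the involution is pinned down, combining it with Lemma~\ref{lem-tight}(1) finishes the proof in one line.
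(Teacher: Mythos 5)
Your overall strategy coincides with the paper's: invoke the first part of Lemma~\ref{lem-tight} with $\rho=\one$ (this is where $\beta_1\neq\one$ is actually used, not to control the inserted element as you suggest), and then cancel the non-split $\one$-tight chains by a parity-reversing insert/delete involution. However, the two points you yourself flag as ``the main obstacle'' are exactly where the content of the proof lives, and your sketch gets one of them wrong. The element to insert between $\alpha_{j-1}=h+t$ and $\alpha_j=H+T$ must be $h+T$, the head of the \emph{lower} element plus the tail of the \emph{upper} one --- not your first candidate $H+t$. The reason is that tails are computed top-down from the successor's tail: $h+T$ has the same tail $T$ as $H+T$, so after insertion every other element of the chain keeps its head and tail (the paper's Claim~\ref{cla-head}), whereas the tail of $H+t$ relative to the successor $H+T$ need not be $t$, which can change the tail of $\alpha_{j-1}$ and destroy both the non-splitness witness and the property $f(f(C))=C$. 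The precise case split is also not optional: a non-split $\one$-tight chain admits an index $j$ with either $h_{j-1}<h_j$ and $t_{j-1}<t_j$ (insert $h_{j-1}+t_j$) or $h_{j-1}=h_j<h_{j+1}$ and $t_{j-1}<t_j=t_{j+1}$ (delete $\alpha_j$), and one must take the \emph{largest} such $j$ so that the two moves are mutually inverse.

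The more serious omission is the deletion case. Removing $\alpha_j=h+T$ merges two $\one$-tight steps into the single step from $h+t$ to $H+T$, and $\one$-tightness of the merged step is \emph{not} formal: one must show that $\one+h+t\le H+T$ is impossible. The paper's argument is that in any occurrence of $\one+h+t$ in the direct sum $H+T$, the set of pattern letters landing in $T$ is a block-suffix of $\one+h+t$ (here indecomposability of the blocks is used); if this suffix were longer than $t$, it would contradict the maximality of $t$ as the tail of $\alpha_{j-1}$, so $\one+h\le H$, whence $\one+(h+T)\le H+T$, contradicting the $\one$-tightness of the original step from $\alpha_j$ to $\alpha_{j+1}$. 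Nothing in your proposal supplies this step, and without it the map is not an involution on the set of $\one$-tight chains. As written, the proposal is a correct plan along the paper's route, but the construction is not pinned down and the key verification is missing.
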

\begin{proof} By the first part of Lemma~\ref{lem-tight}, we know that 
$\mu(\alpha,\beta)$ is equal to $w(\tch{\alpha}{\beta}{\one})$, that is, to the 
total weight of all the $\one$-tight chains from $\alpha$ to $\beta$. Define 
the set $\widehat\cC=\tch{\alpha}{\beta}{\one}\setminus\cC^{*}$ of all the 
$\one$-tight, non-split chains from $\alpha$ to~$\beta$. 

To prove the lemma, we need to show that $w(\widehat\cC)=0$. To achieve this, 
we again use a parity-reversing involution $f$ on the set $\widehat\cC$. 
Consider a chain $C\in\widehat\cC$ with elements 
$\alpha_0<\alpha_1<\dotsb<\alpha_q$. Since $C$ is not split, there must exist 
an index $j\in\{1,\dotsc,q\}$ such that either 
\begin{enumerate}
\item\label{case1}
$h_{j-1}<h_j$ and $t_{j-1}<t_j$, or 
\item\label{case2}
$h_{j-1}=h_j<h_{j+1}$ and $t_{j-1}<t_j=t_{j+1}$.
\end{enumerate}
Fix such an index $j$ as large as possible and distinguish two cases depending 
on which of the two above-mentioned possibilities occur for this index~$j$.

\emph{Case} (\ref{case1}). 
Assume that $h_{j-1}<h_j$ and $t_{j-1}<t_j$. Let us write $h=h_{j-1}$, 
$H=h_j$, $t=t_{j-1}$, and $T=t_j$, so we have $\alpha_{j-1}=h+t$ and 
$\alpha_j=H+T$. Define a permutation $\gamma=h+T$, and a new chain 
$f(C)=C\cup\{\gamma\}$. Note that since $C$ is a $\one$-tight chain, and in 
particular $\alpha_{j-1}\tight{\one}\alpha_j$, we also have 
$\alpha_{j-1}\tight{\one}\gamma\tight{\one}\alpha_j$, and hence $f(C)$ is a 
$\one$-tight chain as well. 

We need to prove that $f(C)\in\widehat\cC$, which follows easily from the following 
claim.

\begin{claim}\label{cla-head}
Each permutation of $C$ has the same head and tail in $f(C)$ as in~$C$. The 
permutation $\gamma=h+T$ has head $h$ and tail $T$ in~$f(C)$.
\end{claim}
\begin{proof}[Proof of Claim~\ref{cla-head}]
It is clear that the claim holds for all the permutations that are greater 
than~$\gamma$. 

It is also easy to see that the claim holds for $\gamma$. Indeed, the 
successor of $\gamma$ in $f(C)$ is the permutation $H+T$, whose tail is~$T$. 
Since the tail of $\gamma$ cannot be greater than $T$ and since $\gamma=h+T$, 
it follows that the tail of $\gamma$ is $T$ and its head is~$h$. 

Let us now consider the permutation $\alpha_{j-1}=h+t$. The successor of 
$\alpha_{j-1}$ in $C$ is the permutation $\alpha_j=H+T$, and the successor of 
$\alpha_{j-1}$ in $f(C)$ is the permutation $\gamma=h+T$. Since the two 
successors have the same tail $T$, and since the tail of a permutation only 
depends on the tail of its successor, we see that $\alpha_{j-1}$ has the same
tail (and hence also the same head) in $f(C)$ as in~$C$. 

From these facts, the claim immediately follows.
\end{proof}     

We may now conclude that $f(C)\in \widehat\cC$, and turn to the second case of
the proof of the lemma. 
                                               
\emph{Case} (\ref{case2}).  Assume now that $h_{j-1}=h_j<h_{j+1}$ and 
$t_{j-1}<t_j=t_{j+1}$. Let us define $h=h_{j-1}=h_j$, $H=h_{j+1}$, 
$t=t_{j-1}$, and $T=t_j=t_{j+1}$. In particular, $\alpha_{j-1}=h+t$, 
$\alpha_j=h+T$, and $\alpha_{j+1}=H+T$. Define the chain 
$f(C)=C\setminus\{\alpha_j\}$.                    

We claim that $f(C)$ is $\one$-tight. To see this, it is enough to
prove ${h+t\tight{\one}H+T}$. Assume, for a contradiction, that
$\one+h+t\le H+T$. In any occurrence of $\one+h+t$ inside $H+T$, the
prefix $\one+h$ must occur inside $H$, otherwise we get a
contradiction with the assumption that $t$ is the tail
of~$\alpha_{j-1}$. This shows that $\one+h\le H$, and hence
$\one+h+T=\one+\alpha_j\le \alpha_{j+1}=H+T$, contradicting the
assumption that $C$ is $\one$-tight. To finish the proof of the lemma,
we need one more claim.

\begin{claim}\label{cla-head2}
Each permutation of $f(C)$ has the same head and tail in $f(C)$ as in $C$.
\end{claim}                                                               
\begin{proof}[Proof of Claim~\ref{cla-head2}]
It is enough to prove the claim for the permutation $\alpha_{j-1}=h+t$, because
any other permutation of $f(C)$ has the same successor in $f(C)$ as in~$C$. For 
$\alpha_{j-1}$, the claim follows from the fact that the successor of 
$\alpha_{j-1}$ in $C$ has the same tail as the successor of $\alpha_{j-1}$ 
in~$f(C)$. This completes the proof of the claim. 
\end{proof}
We now see that even in this second case, $f(C)$ belongs to~$\widehat\cC$.

Combining the two cases described above, we see that $f$ is a
parity-reversing involution of the set $\widehat\cC$. This means that
$w(\widehat\cC)=0$, and consequently, $\mu(\alpha,\beta)=w(\cC^{*})$,
as claimed. This completes the proof of the lemma.
\end{proof}        

Finally, we can prove Proposition~\ref{pro-second}. Assume that
$\sigma$ is a permutation with decomposition
$\sigma_1+\dotsb+\sigma_m$ and that $\pi$ is a permutation with
decomposition $\pi_1+\dotsb+\pi_n$, where $n\ge 2$ and
$\pi_1>\one$. Let $k\ge 1$ be the largest integer such that all the
blocks $\pi_1,\dotsc,\pi_k$ are equal to~$\pi_1$. Recall that our goal
is to prove identity \eqref{eq-second}, which reads as follows:
\[
\mu(\sigma,\pi)=\sum_{i=1}^m \sum_{j=1}^k \mu(\spr i,\pi_1)\mu(\ssu i, \psu 
j).
\]                                                     

Let $\cC^{*}$ be the set of $\one$-tight split chains from $\sigma$ to
$\pi$.  From Lemma~\ref{lem-split}, we know that
$\mu(\sigma,\pi)=w(\cC^{*})$. For a chain $C\in\cC^{*}$, let $t_0(C)$
be the tail of the element $\sigma\in C$, which is the smallest
element in the chain. By definition, $t_0(C)$ is a suffix of $\sigma$,
that is, it is equal to $\ssu i$ for some value of
$i\in\{0,\dotsc,m\}$. Define, for each $i\in\{0,\dotsc,m\}$, the set
of chains
\[
\cC_i=\{C\in\cC^{*}, t_0(C)=\ssu i\}.
\]
The sets $\cC_i$ form a disjoint partition of $\cC^{*}$. We will now compute the 
weight of the individual sets~$\cC_i$.

\begin{claim}\label{cla-c0}
Let $C$ be a chain from $\cC^{*}$. Every element of $C$ has nonempty head. 
Consequently, $t_0(C)$ is never equal to $\sigma$, and hence $\cC_0$ is empty.
\end{claim}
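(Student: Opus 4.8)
The plan is to prove the stronger statement that \emph{every} element of a chain $C\in\cC^{*}$ has nonempty head, and then read off the two consequences. Write the elements of $C$ as $\sigma=\alpha_0<\alpha_1<\dotsb<\alpha_q=\pi$. I would argue by downward induction on the index $i$, showing $h_i(C)\neq\zero$ for all $i\in\{0,\dotsc,q\}$. The base case $i=q$ is immediate from the definition: $h_q(C)=\pi_1$, and $\pi_1$ is nonempty because the standing hypothesis of this subsection is $\pi_1>\one$. Note that the split property of $C$ plays no role in this argument; the only features used are that $C$ is $\one$-tight and that $\pi_1\neq\one$.

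For the inductive step, suppose $1\le i\le q$ and $h_i(C)$ is nonempty. Recall how $h_{i-1}(C)$ is defined: writing $\gamma=\alpha_{i-1}$ with decomposition $\gamma=\gamma_1+\dotsb+\gamma_r$, and letting $j$ be the smallest integer with $\suf{\gamma}{j}\le t_i(C)$, one has $h_{i-1}(C)=\pre{\gamma}{j}$ and $t_{i-1}(C)=\suf{\gamma}{j}$. Thus it suffices to show $j\ge 1$. I would do this by contradiction: if $j=0$ then $\alpha_{i-1}=\suf{\gamma}{0}=\gamma\le t_i(C)$, and since $h_i(C)$ is nonempty we have $\one\le h_i(C)$, so by monotonicity of direct sums under pattern containment
\[
\one+\alpha_{i-1}\le h_i(C)+\alpha_{i-1}\le h_i(C)+t_i(C)=\alpha_i,
\]
which contradicts $\alpha_{i-1}\tight{\one}\alpha_i$, a consequence of $C$ being $\one$-tight. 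Hence $j\ge 1$ and $h_{i-1}(C)=\pre{\gamma}{j}$ is nonempty, completing the induction.

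Finally I would specialize to $i=0$. Since $h_0(C)$ is nonempty and $\sigma=\alpha_0=h_0(C)+t_0(C)$, comparing lengths gives $|t_0(C)|=|\sigma|-|h_0(C)|<|\sigma|$, so $t_0(C)\neq\sigma$. As $t_0(C)$ is a suffix of $\sigma$, it equals $\ssu i$ for some $i$, and it cannot be $\ssu 0=\sigma$; therefore no chain of $\cC^{*}$ satisfies $t_0(C)=\ssu 0$, i.e.\ $\cC_0$ is empty.

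I do not expect a serious obstacle here. The only points needing mild care are confirming the monotonicity fact that $\rho\le\rho'$ implies $\rho+\tau\le\rho'+\tau$ (immediate from the definition of pattern containment), and making sure the $\one$-tightness hypothesis is invoked at exactly the link $\alpha_{i-1}<\alpha_i$ of the chain where the head first shrinks, rather than elsewhere.
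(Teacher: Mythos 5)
Your proof is correct and is essentially the paper's argument: the paper takes the largest element with empty head and derives $\one+\alpha\le\alpha'$ from its successor, which is exactly the contradiction in your inductive step, so the downward induction is just a repackaging of the same idea. (The paper, like you, uses only $\one$-tightness, not the split property.)
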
                                                                   
\begin{proof}
Suppose that $C$ has an element with empty head. Let $\alpha$ be the largest 
such element. By definition, the element $\pi\in C$ has head equal to 
$\pi_1$, so $\alpha\neq \pi$. In particular, $\alpha$ has a successor 
$\alpha'$ in $C$, and $\alpha'$ has nonempty head. Let $h'$ and $t'$ be the 
head and tail of $\alpha'$. By assumption, $h'$ is nonempty, which means that 
$\one\le h'$.  Moreover, $\alpha\le t'$, because $\alpha$ is its own tail. 
This means that $\one+\alpha\le \alpha'$, which is impossible because the 
chains in $\cC^{*}$ are assumed to be $\one$-tight.

This shows that every element of $C$ has nonempty head, and the rest of the 
claim follows directly.
\end{proof}            

Claim~\ref{cla-c0} implies that $w(\cC_0)=0$, and hence 
$\mu(\sigma,\pi)=\sum_{i\ge 1} w(\cC_i)$. It remains to determine the value of 
$w(\cC_i)$ for~$i>0$. 

Fix an integer $i\in\{1,\dotsc,m\}$. Define $h=\spr i$, $t=\ssu i$, 
$H=\pi_1$, and $T=\psu 1$. Note that in a chain $C\in\cC_i$, the 
permutation $\sigma$ has head $h$ and tail $t$, 
while the permutation $\pi$ has head $H$ and 
tail~$T$.

\begin{claim}\label{cla-concat} With the notation as above, 
\[
w(\cC_i)=w(\tch{h}{H}{\one}) w(\tch{t}{T}{H}).
\]                                                                  
\end{claim}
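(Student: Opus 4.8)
The plan is to exhibit a bijection between $\cC_i$ and the product set $\tch{h}{H}{\one}\times\tch{t}{T}{H}$ under which a chain $C$ corresponding to a pair $(C_1,C_2)$ satisfies $L(C)=L(C_1)+L(C_2)$. Any such bijection is weight-multiplicative on individual chains, so summing over $\cC_i$ gives $w(\cC_i)=w(\tch{h}{H}{\one})\,w(\tch{t}{T}{H})$, which is the claim.

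First I would analyze a chain $C\in\cC_i$, say with elements $\sigma=\alpha_0<\alpha_1<\dotsb<\alpha_q=\pi$, via its unique split index $s$: by definition $t_0=\dotsb=t_s$ and $h_s=\dotsb=h_q$. Since $\sigma$ has head $h$ and tail $t$ and $\pi$ has head $H$ and tail $T$, this forces $\alpha_j=h_j+t$ for $j\le s$ (with $h_0=h$ and $h_s=H$) and $\alpha_j=H+t_j$ for $j\ge s$ (with $t_s=t$ and $t_q=T$). Because $C$ is $\one$-tight, Lemma~\ref{obs-tight} (applied with the fixed suffix $t$) shows that $\{h_0<\dotsb<h_s\}$ is a $\one$-tight chain from $h$ to $H$, and Lemma~\ref{obs-tight2} (applied with the fixed indecomposable prefix $H=\pi_1$) shows that $\{t_s<\dotsb<t_q\}$ is an $H$-tight chain from $t$ to $T$. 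This defines the map $C\mapsto(C_1,C_2)$, and clearly $L(C)=s+(q-s)=L(C_1)+L(C_2)$.

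For the inverse, given $C_1=\{h=h_0<\dotsb<h_s=H\}$ in $\tch{h}{H}{\one}$ and $C_2=\{t=t_s<\dotsb<t_q=T\}$ in $\tch{t}{T}{H}$, I would concatenate them as
\[
g(C_1,C_2)=\{h_0+t<\dotsb<h_s+t\}\cup\{H+t_s<\dotsb<H+t_q\},
\]
the two halves sharing only the single element $h_s+t=H+t=H+t_s$, so that $g(C_1,C_2)$ is a chain from $\sigma$ to $\pi$ of length $q=s+(q-s)$. Lemmas~\ref{obs-tight} and~\ref{obs-tight2} again show it is $\one$-tight. The one genuinely technical point is to verify that in $g(C_1,C_2)$ the element $h_j+t$ has head $h_j$ and tail $t$ for $j\le s$, while $H+t_j$ has head $H$ and tail $t_j$ for $j\ge s$; I would prove this by computing heads and tails from the top element downwards, using $t_{j-1}\tight{H}t_j$ (i.e.\ $H+t_{j-1}\not\le t_j$) to pin down the block index in the upper half, and using that every suffix of $h_j+t$ strictly longer than $t$ has more letters than $t$ and hence cannot embed into $t$ in the lower half. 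Granting this, $g(C_1,C_2)$ is split with split index $s$ and has $t_0=t=\ssu i$, so it lies in $\cC_i$, and $g$ inverts the previous map.

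With the bijection established, the claim follows from
\[
w(\cC_i)=\sum_{C\in\cC_i}(-1)^{L(C)}
=\sum_{(C_1,C_2)}(-1)^{L(C_1)}(-1)^{L(C_2)}
=w(\tch{h}{H}{\one})\,w(\tch{t}{T}{H}).
\]
I expect the head/tail bookkeeping in the inverse direction to be the only delicate step. The degenerate cases cause no trouble: if either $h\not\le H$ or $t\not\le T$ then both the corresponding factor set and $\cC_i$ are empty, and a trivial (length-$0$) tight chain contributes weight $1$, consistently with the convention $\mu(\tau,\tau)=1$.
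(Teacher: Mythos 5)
Your proof is correct and follows essentially the same route as the paper: the same concatenation bijection $(C_1,C_2)\mapsto (C_1+t)\cup(H+C_2)$, the same use of Lemmas~\ref{obs-tight} and~\ref{obs-tight2} to transfer tightness, and the same key technical step of verifying that the concatenated chain is split with the prescribed heads and tails (you do this by a direct top-down computation using the indecomposability of $H$ and the $H$-tightness of $C_2$, where the paper argues by taking a largest counterexample, but the content is the same).
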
    
\newcommand{\cCa}{{\cC'}}
\newcommand{\cCb}{{\cC''}}                                                     
\begin{proof}                         
Let us write $\cCa=\tch{h}{H}{\one}$ and $\cCb=\tch{t}{T}{H}$. We will 
provide a bijection $f\colon \cCa\times\cCb\to\cC_i$, which maps a pair of 
chains $(C_1,C_2)\in \cCa\times\cCb$ to a chain $f(C_1,C_2)\in\cC_i$ whose length 
is equal to $L(C_1)+L(C_2)$. Such a bijection immediately implies the 
identity $w(\cC_i)=w(\cCa)w(\cCb)$ from the claim.

The definition of the mapping $f$ is simple: for $C_1\in\cCa$ and 
$C_2\in\cCb$, define $f(C_1,C_2)$ to be the concatenation of the two chains 
$C_1+t$ and $H+C_2$. This is well defined, since the maximum of $C_1+t$ is 
the permutation $H+t$, which is also equal to the minimum of the chain 
$H+C_2$. Thus, $f(C_1,C_2)$ is a chain of length $L(C_1)+L(C_2)$. Let 
us denote this chain by~$C$.

We now show that $C$ belongs to $\cC_i$. Let us call the two sub-chains 
$C_1+t$ and $H+C_2$ respectively the \emph{bottom part} and the \emph{top 
part} of~$C$. Note that the permutation $H+t$ is the unique element of $C$ 
belonging both to the top part and to the bottom part.

By construction, $C$ is a chain from $\sigma$ to $\pi$. The bottom part of 
$C$ is a $\one$-tight chain, because $C_1$ was assumed to be $\one$-tight 
(see Lemma~\ref{obs-tight}). Similarly, by 
Lemma~\ref{obs-tight2}, the top part of $C$ is a $\one$-tight chain, 
because $C_2$ is $H$-tight and $H$ is indecomposable. This shows that the 
chain $C$ is $\one$-tight.

Our next step is to prove that every element in the top part of $C$ has head 
equal to $H$, and that every element in the bottom part of $C$ has tail equal 
to~$t$. Assume that this statement is false, and let $\alpha$ be the largest 
element of $C$ for which it fails. Clearly, $\alpha\neq\pi$, so $\alpha$ has 
a successor $\beta$ in $C$. Suppose first that $\alpha$ belongs to the top 
part of $C$. Then $\alpha$ can be written as a sum $H+\alpha'$ for some 
$\alpha'\in C_2$, and likewise $\beta=H+\beta'$ for $\beta'\in C_2$. By the choice of 
$\alpha$, we know that the head of $\beta$ is $H$ and hence its tail is 
$\beta'$. Since $\alpha'<\beta'$, the tail of $\alpha$ contains~$\alpha'$. On 
the other hand, the only suffix of $\alpha$ longer than $\alpha'$ is the 
permutation $\alpha$ itself, because $H$ is indecomposable. By 
Claim~\ref{cla-c0}, the head of $\alpha$ must be nonempty, which means that 
the head of $\alpha$ can only be equal to $H$, which contradicts our choice 
of~$\alpha$.

Suppose now that $\alpha$ does not belong to the top part of $C$. Then 
$\beta$ belongs to the bottom part of $C$ (and possibly to the top part as 
well). Consequently, $\alpha$ can be written as $\alpha'+t$ and $\beta$ can 
be written as $\beta'+t$, with $\alpha',\beta'\in C_1$. We also know that $t$ 
is the tail of $\beta$. This makes it clear that $t$ is the tail of $\alpha$ 
as well, which is a contradiction. 

This proves that all the elements of the top part of $C$ indeed have head 
$H$, and all the elements in the bottom part have tail $t$. This shows that 
$C$ is a split chain and also that $t_0(C)=t$. We have shown that $C\in\cC_i$.

It is clear that $f$ is an injective mapping. To complete the proof of the 
claim, it only remains to show that $f$ is surjective, that is, for every 
$C\in\cC_i$ there are chains $(C_1,C_2)\in\cCa\times\cCb$ with $f(C_1,C_2)=C$.

Choose a chain $C\in\cC_i$. Since $C$ is split, it must contain the element 
$H+t$. Call the elements of $C$ contained in $H+t$ the \emph{bottom part} of 
$C$, and the elements containing $H+t$ the \emph{top part} of~$C$. The 
definition of split chain further implies that all the elements in the top 
part have the same head~$H$ and all the elements in the bottom part have the 
same tail~$t$. Hence, the bottom part of the chain $C$ has the form $C_1+t$ 
for some chain $C_1\in\chain{h}{H}$. Similarly, the top part has the form 
$H+C_2$ for a chain $C_2\in\chain{t}{T}$. Since $C$ is $\one$-tight, we may 
use Lemmas~\ref{obs-tight} and~\ref{obs-tight2} to see that $C_1$ is 
$\one$-tight and $C_2$ is $H$-tight, showing that 
$(C_1,C_2)\in\cCa\times\cCb$. Since $f(C_1,C_2)=C$, we see that $f$ is the 
required bijection.
\end{proof}       

We now have all the necessary ingredients to finish the proof of 
Proposition~\ref{pro-second}. Let us write $H=\pi_1$ and $T=\psu 1$. From our results,
we get
\begin{align*}
\mu(\sigma,\pi)
&=w(\chain{\sigma}{\pi})&\\
&=w(\cC^{*})&&\text{by Lemma~\ref{lem-split}}\\
&=\sum_{i=1}^m w(\cC_i)&&\text{by Claim~\ref{cla-c0}}\\
&=\sum_{i=1}^m w(\tch{\spr i}{H}{\one}) w(\tch{\ssu i}{T}{H})&&\text{by Claim~\ref{cla-concat}}\\                                   
&=\sum_{i=1}^m \mu(\spr i, H)w(\tch{\ssu i}{T}{H})&&\text{by first part of
Lemma~\ref{lem-tight}}\\
&=\sum_{i=1}^m \mu(\spr i, H)\sum_{j=0}^{k-1}\mu(\ssu i, \suf T j)&&\text{by 
Lemma~\ref{lem-tight2}}\\                                      
&=\sum_{i=1}^m \sum_{j=1}^{k}\mu(\spr i, \pi_1)\mu(\ssu i, \psu 
j)&&\text{since $\suf T j= \psu{j+1}$}
\end{align*}                          
Thus, Proposition~\ref{pro-second} is now proved.

We now present some consequences of Propositions~\ref{pro-first}
and~\ref{pro-second}.

\begin{corollary}\label{cor-algo}
There is an algorithm that, given two separable permutations $\sigma$ and $\pi$,
computes the value of $\mu(\sigma,\pi)$ in time polynomial in~$|\sigma|+|\pi|$.
\end{corollary}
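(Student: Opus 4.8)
The plan is to use Propositions~\ref{pro-first} and~\ref{pro-second} as the basis of a recursive algorithm, and to bound the number of distinct subproblems that can arise. The key observation is that whenever we invoke one of the two recurrences to reduce $\mu(\sigma,\pi)$, every M\"obius function appearing on the right-hand side has the form $\mu(\sigma',\pi')$ where $\sigma'$ is a \emph{factor} of $\sigma$ — that is, a sum $\sigma_a+\sigma_{a+1}+\dotsb+\sigma_b$ of consecutive blocks of $\sigma$ (this includes prefixes $\spr i$, suffixes $\ssu i$, and the empty permutation) — and $\pi'$ is either a single block $\pi_t$ of $\pi$ or a suffix $\psu j$ of $\pi$, hence in turn a factor of $\pi$. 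Since $\sigma$ has at most $|\sigma|$ blocks and $\pi$ has at most $|\pi|$ blocks, there are at most $O(|\sigma|^2)$ factors of $\sigma$ and $O(|\pi|^2)$ factors of $\pi$, so the total number of pairs $(\sigma',\pi')$ that can ever arise as subproblems is $O(|\sigma|^2|\pi|^2)$. This is where separability is used in an essential way: for a general decomposable $\pi$ the recurrences also require values $\mu(\sigma',\pi_t)$ with $\pi_t$ an \emph{indecomposable} block, and for non-separable permutations there is no further structure to exploit; but when $\pi$ is separable every indecomposable block $\pi_t$ of length $\ge 2$ is itself skew-decomposable, so by the reversal symmetry noted after Proposition~\ref{pro-second} we may apply the skew analogues of the two recurrences to $\pi_t$, again producing only subproblems indexed by factors of $\sigma$ and $\pi$.

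The algorithm is then the obvious memoized recursion. First I would precompute, for every pair of factors $(\sigma',\pi')$ of $\sigma$ and $\pi$, whether $\sigma'\le\pi'$ — this can be done in polynomial time using the algorithm of Bose, Buss and Lubiw~\cite{bbl} for pattern containment among separable permutations, or simply folded into the recursion with the convention $\mu(\sigma',\pi')=0$ when $\sigma'\not\le\pi'$. Then I would compute $\mu(\sigma',\pi')$ for all such pairs in order of increasing $|\pi'|$ (and within that, say, increasing $|\sigma'|$): the base cases are $|\pi'|\le 1$, handled directly from the definition and the remarks at the end of Section~\ref{sec-defs} ($\mu(\sigma',\sigma')=1$, $\mu(\zero,\one)=-1$, etc.); if $\pi'$ is decomposable, apply Proposition~\ref{pro-first} or~\ref{pro-second} according to whether its first block is $\one$; if $\pi'$ is indecomposable of length $\ge 2$, it is skew-decomposable, so apply the skew version of the appropriate recurrence. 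In each case the right-hand side is a sum, over $O(|\sigma'|\cdot|\pi'|)$ terms, of products of previously computed table entries, so each entry costs polynomial time to fill and the whole table is filled in time polynomial in $|\sigma|+|\pi|$.

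The main point that needs care — and which I expect to be the only genuine obstacle — is verifying that the recursion actually \emph{terminates} and stays within the claimed table of subproblems, i.e.\ that every recursive call strictly decreases an appropriate measure. For Proposition~\ref{pro-first} this is clear since $\psu k$ is a proper suffix of $\pi$ (as $k\ge 1$ and $n\ge 2$), so $|\pi'|$ strictly drops. For Proposition~\ref{pro-second} the terms $\mu(\ssu i,\psu j)$ with $j\ge 1$ again have $|\psu j|<|\pi|$, but the terms $\mu(\spr i,\pi_1)$ have $\pi_1$ a block of $\pi$ of the \emph{same} total length only when $n=1$ — which cannot happen here since $\pi$ is decomposable — so in fact $|\pi_1|<|\pi|$ as well; the subtlety is the subsequent skew-decomposition step applied to an indecomposable block, where one must check that the skew recurrence, applied to a permutation that is skew-decomposable but not direct-decomposable, likewise produces only strictly shorter $\pi'$-arguments (it does, by the mirror of the argument just given, since such a block has at least two skew-blocks). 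Pinning down this well-foundedness — e.g.\ by induction on $|\pi'|$ with ties broken by "decomposable before indecomposable" — is the heart of the proof; once it is in place, the polynomial bound on the number of distinct $(\sigma',\pi')$ and on the per-entry work gives the result immediately.
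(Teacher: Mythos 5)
Your proposal is correct and is essentially the paper's proof: a memoized dynamic-programming evaluation of the two recurrences (and their skew analogues, via the reversal symmetry) over a polynomial-size family of subproblems, terminating because every recursive call strictly shortens the $\pi$-argument. The one piece of bookkeeping to tighten is your definition of ``factor'': sums of consecutive blocks of the direct decomposition are not closed under the subsequent skew-decomposition step (an indecomposable block admits only itself as such a factor), so the correct closed family is the set of contiguous position ranges $\sigma[i,j]$ and $\pi[k,\ell]$ --- precisely the ``range subpermutations'' the paper uses --- which still number $O(|\sigma|^2|\pi|^2)$ and contain every subproblem generated by alternating direct and skew decompositions.
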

\begin{proof}
Let $\pi=\pi_1\pi_2\dotsb\pi_n$ be a separable permutation. For two integers
$i,j$ with $1\le i\le j\le n$, let $\pi[i,j]$ denote the subpermutation of
$\pi$ order-isomorphic to the sequence $\pi_i,\pi_{i+1},\dotsc,\pi_j$. Note
that $\pi[i,j]$ is also separable. We call $\pi[i,j]$ a \emph{range
subpermutation} of~$\pi$.

Suppose that $\sigma=\sigma_1\dotsb\sigma_m$ and $\pi=\pi_1\dotsb\pi_n$ are two
separable permutations. Our goal is to compute $\mu(\sigma,\pi)$. We will use 
a straightforward dynamic programming algorithm to perform this computation.
We will compute all the values of the form $\mu(\sigma[i,j], \pi[k,\ell])$, for all quadruples $(i,j,k,\ell)$
satisfying $1\le i\le j\le m$ and $1\le k\le \ell\le n$.  For each such
quadruple $(i,j,k,\ell)$ we store the value of $\mu(\sigma[i,j],\pi[k,\ell])$
once we compute it, so that we do not need to compute this value more than
once, even though we may need it several times to compute other values of~$\mu$.

There are $\cO(m^2n^2)$ quadruples $(i,j,k,\ell)$ to consider, and for
each such quadruple, we may use Propositions~\ref{pro-first}
and~\ref{pro-second} to express $\mu(\sigma[i,j], \pi[k,\ell])$ as a
combination of polynomially many values of the form
$\mu(\sigma[i',j'],\pi[k',\ell'])$ where $\sigma[i',j']$ and
$\pi[k',\ell']$ are range subpermutations of $\sigma[i,j]$ and
$\pi[k,\ell]$ with $\pi[k',\ell']\neq\pi[k,\ell]$.  Therefore, we can
in polynomial time compute all the values of the form
$\mu(\sigma[i,j],\pi[k,\ell])$, including
$\mu(\sigma,\pi)=\mu(\sigma[1,m],\pi[1,n])$.
\end{proof}

Note that the number of permutations belonging to an interval $[\sigma,\pi]$ may
in general be exponential in the size of $\pi$, even when $\pi$ and $\sigma$
are separable. Therefore, computing the M\"obius function $\mu(\sigma,\pi)$
directly from equation~\eqref{eq:eq-mob} would be much less efficient than the
algorithm of the previous corollary.

Let us say that a class of permutations $\clc$ is \emph{sum-closed} if for each
$\pi,\sigma\in\clc$, the class $\clc$ also contains $\pi+\sigma$. Similarly,
$\clc$ is \emph{skew-closed} if $\pi,\sigma\in\clc$ implies $\pi*\sigma\in\clc$.
For a set $\clp$ of permutations, the \emph{closure} of $\clp$, denoted by
$\cl{\clp}$, is the smallest sum-closed and skew-closed class of permutations
that contains~$\clp$. Notice that $\cl{\{\one\}}$ is exactly the set of
separable permutations.

The next corollary is an immediate consequence of
Propositions~\ref{pro-first} and~\ref{pro-second} (see also Corollary
\ref{coro-first-recurrence}), and we omit its proof.

\begin{corollary}\label{cor-bounded} Suppose that $\sigma$ is a permutation that
is neither decomposable nor skew-decomposable. Let $\clp$ be any set of
permutations. Then 
\[
\max\{|\mu(\sigma,\pi)|;\; \pi\in \clp\}=\max\{|\mu(\sigma,\pi)|;\; \pi\in
\cl\clp\}.
\]
Moreover, the computation of $\mu(\sigma,\pi)$ for $\pi\in\cl\clp$ can be
efficiently reduced to the computation of the values $\mu(\sigma,\rho)$ for
$\rho\in\clp$.
\end{corollary}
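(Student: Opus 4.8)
The plan is to show that for $\sigma$ neither decomposable nor skew-decomposable, the value of $\mu(\sigma,\pi)$ for $\pi\in\cl\clp$ is always equal (possibly up to sign) to the value of $\mu(\sigma,\rho)$ for some $\rho\in\clp$, which immediately yields both the equality of maxima and the reduction. The key observation is that $\sigma$ being neither decomposable nor skew-decomposable means $\sigma$ is a single block with respect to both direct-sum and skew-sum decomposition; in particular $\sigma\ne\one$ is impossible only if $\sigma=\one$, but in any case $m=1$ in the notation of the main section (and symmetrically $\sigma$ has one block as a skew sum). So whenever $\pi$ is decomposable, Corollary~\ref{coro-first-recurrence} applies and tells us that $\mu(\sigma,\pi)$ is either $0$, or $\pm\mu(\sigma,\pi_1)$ where $\pi_1$ is the first block of the decomposition of $\pi$; and by the reversal symmetry noted after Proposition~\ref{pro-second}, the analogous statement holds when $\pi$ is skew-decomposable, with $\pi_1$ replaced by the first block of the skew decomposition.

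First I would set up an induction on $|\pi|$ to prove the following claim: for every $\pi\in\cl\clp$ there exists $\rho\in\clp$ with $|\mu(\sigma,\pi)|\le|\mu(\sigma,\rho)|$, and moreover $\mu(\sigma,\pi)=0$ or $\mu(\sigma,\pi)=\pm\mu(\sigma,\rho)$. The base case is $\pi\in\clp$ itself (take $\rho=\pi$), together with the trivial case $\pi=\one$, which lies in $\cl\clp$ only if $\one\in\clp$ or $\clp=\emptyset$; if $\pi=\one\notin\clp$ the relevant set of $\pi$'s is empty and there is nothing to prove. For the inductive step, take $\pi\in\cl\clp\setminus\clp$ with $|\pi|\ge2$. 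Since $\cl\clp$ is generated from $\clp$ by direct sums and skew sums, $\pi$ is either decomposable or skew-decomposable (or possibly $\one$, already handled). In the decomposable case, Corollary~\ref{coro-first-recurrence} gives $\mu(\sigma,\pi)\in\{0,\mu(\sigma,\pi_1),-\mu(\sigma,\pi_1)\}$; one then notes that $\pi_1$, being a block of $\pi\in\cl\clp$, again lies in $\cl\clp$ and is strictly shorter than $\pi$, so the induction hypothesis applies to $\pi_1$. (One should check that a block of a member of $\cl\clp$ is itself in $\cl\clp$: this follows because $\cl\clp$ is sum- and skew-closed, so in building $\pi$ from elements of $\clp$ the indecomposable summands that appear are themselves built from $\clp$.) The skew-decomposable case is identical after applying reversal. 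Composing the two relations finishes the induction.

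The step I expect to require the most care is the bookkeeping around membership of blocks in $\cl\clp$ and around the exceptional cases $\pi_1=\one$ (so that $\pi_1$ may not lie in $\clp$ directly and one leans on the $\one$-case of the claim). Everything else is a routine application of Corollary~\ref{coro-first-recurrence} together with the reversal symmetry. Once the claim is established, the equality
\[
\max\{|\mu(\sigma,\pi)|;\;\pi\in\clp\}=\max\{|\mu(\sigma,\pi)|;\;\pi\in\cl\clp\}
\]
is immediate: ``$\le$'' is trivial since $\clp\subseteq\cl\clp$, and ``$\ge$'' follows since every $\pi\in\cl\clp$ has $|\mu(\sigma,\pi)|\le|\mu(\sigma,\rho)|$ for some $\rho\in\clp$. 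The efficient reduction is the algorithmic content of the same induction: to compute $\mu(\sigma,\pi)$ for $\pi\in\cl\clp$, decompose $\pi$ (as a direct or skew sum) in linear time, apply Corollary~\ref{coro-first-recurrence}, and recurse on a strictly shorter member of $\cl\clp$, bottoming out at elements of $\clp$; this performs only $O(|\pi|)$ recursive calls, each reducing to a single previously-computed value $\mu(\sigma,\rho)$ with $\rho\in\clp$.
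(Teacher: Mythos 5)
The paper omits the proof of this corollary (it calls it an immediate consequence of Propositions~\ref{pro-first} and~\ref{pro-second}), so the only question is whether your argument is sound, and it has one genuine gap: the parenthetical claim that ``a block of a member of $\cl\clp$ is itself in $\cl\clp$'' is false. The closure is only required to be closed under \emph{forming} sums and skew sums, not under taking blocks, and the elements of $\clp$ themselves may be decomposable. For instance, take $\clp=\{2413+2413\}$ and $\pi=(2413+2413)+(2413+2413)\in\cl\clp$; every member of $\cl\clp$ has length divisible by $8$, so the first block $\pi_1=2413$ of $\pi$ does not lie in $\cl\clp$, and your induction hypothesis cannot be applied to it. A related problem occurs when $\pi_1=\one$: there Corollary~\ref{coro-first-recurrence} does not apply at all (it assumes $\pi_1>\one$), and Proposition~\ref{pro-first} reduces $\mu(\sigma,\pi)$ to $\pm\mu(\sigma,\psu 1)$, where the suffix $\psu 1$ likewise need not belong to $\cl\clp$.

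The statement survives, and the repair is to run the induction over the summands used to build $\pi$ inside $\cl\clp$ rather than over the blocks of its decomposition. If $\pi\in\cl\clp\setminus\clp$ then $\pi=\alpha+\beta$ or $\pi=\alpha*\beta$ with $\alpha,\beta\in\cl\clp$ strictly shorter, and one checks that $\mu(\sigma,\pi)\in\{0,\pm\mu(\sigma,\alpha),\pm\mu(\sigma,\beta)\}$ by applying the recurrences twice, once to $\pi$ and once to $\alpha$ (or $\beta$). For example, when $\pi_1>\one$, Corollary~\ref{coro-first-recurrence} says $\mu(\sigma,\pi)$ is nonzero only if $\pi=k\times\pi_1$ or $\pi=k\times\pi_1+\one$; in either case $\alpha=j\times\pi_1$ for some $j\ge1$, so a second application gives $\mu(\sigma,\alpha)=\mu(\sigma,\pi_1)$ and hence $\mu(\sigma,\pi)=\pm\mu(\sigma,\alpha)$. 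The case $\pi_1=\one$ needs one further application of the recurrences to $\psu 1$ but resolves the same way. With this lemma in place of your block-membership claim, the induction closes and your derivation of both the equality of maxima and the polynomial-time reduction goes through unchanged.
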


\section{The M\"obius function of separable permutations}\label{sec-sep}

Let us now consider the values of $\mu(\sigma,\pi)$ for separable
permutations $\sigma$ and~$\pi$. Our goal is to show that the values
of the M\"obius function in the poset of separable permutations have a
combinatorial interpretation in terms of the so-called \emph{normal
  embeddings}, which we define below. This alternative interpretation
of the M\"obius function generalizes previous results of Sagan and
Vatter~\cite{sagan-vatter} for the M\"obius function of intervals of
layered permutations, which we explain at the end of this section.

As a consequence of this new interpretation of the M\"obius function,
we are able to relate the M\"obius function $\mu(\sigma,\pi)$ to the
number of occurrences of $\sigma$ in $\pi$, by showing that
$|\mu(\sigma,\pi)|\le\sigma(\pi)$.  We also show that $\mu(\one,\pi)$
is equal to $-1$, $0$ or $1$ whenever $\pi$ is separable.

The recursive structure of separable permutations makes it convenient
to represent a separable permutation by a tree that describes how the
permutation may be obtained from smaller permutations by sums and skew
sums. We now formalize this concept. A \emph{separating tree} $T$ is a
rooted tree $T$ with the following properties:
\begin{itemize}
\item Each internal node of $T$ has one of two types: it is either a
\emph{direct node} or a \emph{skew node}.
\item Each internal node has at least two children. The children of a given
internal node are ordered into a sequence from left to right.
\end{itemize}

Each separating tree $T$ represents a unique separable permutation $\pi$,
defined recursively as follows:
\begin{itemize}
\item If $T$ has a single node, it represents the singleton
  permutation~$\one$.
\item Assume $T$ has more than one node. Let $N_1,\dotsc, N_k$ be the
  children of the root in their left-to-right order, and let $T_i$
  denote the subtree of $T$ rooted at the node $N_i$. Let
  $p_1,\dotsc,p_k$ be the permutations represented by the trees
  $T_1,\dotsc,T_k$.  Then $T$ represents the permutation
  $p_1+\dotsb+p_k$ if the root of~$T$ is a direct node and
  $p_1*\dotsb*p_k$ if the root of $T$ is a skew node.
\end{itemize}

Note that the leaves of $T$ correspond bijectively to the letters of~$\pi$. In
fact, when we perform a depth-first left-to-right traversal of $T$, we encounter
the leaves in the order that corresponds to the left-to-right order of the
letters of~$\pi$. See Figure~\ref{fig-qp} for an example.

A given separable permutation may be represented by more than one
separating tree. A separating tree is called a \emph{reduced tree} if
it has the property that the children of a direct node are leaves or
skew nodes, and the children of a skew node are leaves or direct
nodes.  Each separable permutation $\pi$ is represented by a unique
reduced tree, denoted by~$T(\pi)$. We assume that each leaf of $T$ is
labelled by the corresponding letter of~$\pi$.

This slightly modified concept of separating tree and its relationship
with separable permutations have been previously studied in
algorithmic contexts~\cite{bbl,ys}. We will now show that the reduced
tree allows us to obtain a simple formula for the M\"obius function of
separable permutations.

Let $[n]$ denote the set $\{1,\dotsc,n\}$.  Let
$\pi=\pi_1\pi_2\ldots\pi_n$ and
$\sigma=\sigma_1\sigma_2\ldots\sigma_m$ be two permutations, with
$\sigma\le \pi$. An \emph{embedding} of $\sigma$ into $\pi$ is a
function $f\colon [m]\to[n]$ with the following two properties:
\begin{itemize}
\item for every $i,j\in [m]$, if $i<j$ then $f(i)<f(j)$ (so $f$ is
  monotone increasing).
\item for every $i,j\in [m]$, if $\sigma_i<\sigma_j$, then
  $\pi_{f(i)}<\pi_{f(j)}$ (so $f$ is order-preserving).
\end{itemize}

Let $f$ be an embedding of $\sigma$ into~$\pi$. We say that a leaf
$\ell$ of $T(\pi)$ is \emph{covered} by the embedding $f$ if the
letter of $\pi$ corresponding to $\ell$ is in the image of~$f$. A leaf
is \emph{omitted} by $f$ if it is not covered by~$f$. An
\emph{internal node} is a node that is not a leaf.  An internal node
$N$ of $T(\pi)$ is \emph{omitted} by $f$ if all the leaves in the
subtree rooted at $N$ are omitted. A node is \emph{maximal omitted},
if it is omitted but its parent in $T(\pi)$ is not omitted.

Assume that $\pi$ is a separable permutation and $T(\pi)$ its reduced
tree.  Two nodes $N_1$ and $N_2$ of a tree $T(\pi)$ are called
\emph{twins} if they are siblings having a common parent $P$, they
appear consecutively in the sequence of children of $P$, and the two
subtrees of $T$ rooted at $N_1$ and $N_2$ are isomorphic, that is,
they only differ by the labeling of their leaves, but otherwise have
the same structure. In particular, any two adjacent leaves are twins.

A \emph{run} under a node $N$ in $T$ is a maximal sequence
$N_1,\dotsc, N_k$ of children of $N$ such that each two consecutive
elements of the sequence are twins. Note that the sequence of children
of each internal node is uniquely partitioned into runs, each possibly
consisting of a single node.  A \emph{leaf run} is a run whose nodes
are leaves, and a \emph{non-leaf run} is a run whose nodes are
non-leaves.  The first (leftmost) element of each run is called
\emph{the leader} of the run and the remaining elements are called
\emph{followers}.

Using the tree structure of $T(\pi)$, we will show that
$\mu(\sigma,\pi)$ can be expressed as a signed sum over a set of
embeddings of $\sigma$ into~$\pi$ that have a special
structure. Following the terminology of Sagan and
Vatter~\cite{sagan-vatter}, we call these special embeddings
\emph{normal}.

\begin{definition} Let $\sigma$ and $\pi$ be separable permutations, let
$T(\pi)$ be the reduced tree of~$\pi$. An embedding $f$ of $\sigma$ into $\pi$
is called \emph{normal} if it satisfies the following two conditions.
\begin{itemize}
\item If a leaf $\ell$ is maximal omitted by $f$, then $\ell$ is the leader of
its corresponding leaf run. 
\item If an internal node $N$ is maximal omitted by $f$, then $N$ is a follower
in its non-leaf run.
\end{itemize}
\end{definition} 

Let $N(\sigma,\pi)$ denote the set of normal embeddings of $\sigma$ into~$\pi$.
The \emph{defect} of an embedding $f\in N(\sigma,\pi)$, denoted by $d(f)$, is
the number of leaves that are maximal omitted by~$f$. The \emph{sign} of $f$,
denoted by $\sgn(f)$, is defined as $(-1)^{d(f)}$.

We now present our main result.

\begin{theorem}\label{thm-main} If $\sigma$ and $\pi$ are (possibly empty)
separable permutations, then
\[
\mu(\sigma,\pi)=\sum_{f\in N(\sigma,\pi)} \sgn(f).
\] 
\end{theorem}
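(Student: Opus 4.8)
The plan is to prove Theorem~\ref{thm-main} by induction on $|\pi|$, using the two recurrences (Propositions~\ref{pro-first} and~\ref{pro-second}) to reduce the M\"obius side, and establishing a matching recursive decomposition of the set of normal embeddings $N(\sigma,\pi)$. The base case $\pi=\one$ (and the degenerate cases with $\pi$ or $\sigma$ empty) is handled directly: if $\pi=\one$ then either $\sigma=\one$, giving the unique normal embedding of defect $0$ and $\mu=1$, or $\sigma=\zero$, giving $\mu(\zero,\one)=-1$ matched by the single embedding with the sole leaf maximal omitted, or $\sigma\not\le\pi$, in which case both sides are $0$. The inductive step splits according to whether $\pi$ is decomposable, skew-decomposable, or a single leaf; by the reversal symmetry $\mu(\sigma,\pi)=\mu(\bar\sigma,\bar\pi)$ noted after Proposition~\ref{pro-second} (and the evident symmetry of reduced trees and normal embeddings under reversal), it suffices to treat the decomposable case, i.e.\ the root of $T(\pi)$ is a direct node.

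So assume $\pi=\pi_1+\dots+\pi_n$ with $n\ge 2$. First I would establish, as a structural lemma, that an embedding $f$ of $\sigma$ into $\pi$ is determined by how it distributes the blocks of $\sigma$ among the blocks of $\pi$: because the $\pi_i$ are indecomposable summands, any occurrence of $\sigma$ in $\pi$ respects the block structure, so there is a nondecreasing surjection-like assignment sending a prefix $\spr{i_0}$ into $\ppr{j_0}$ and the complementary suffix $\ssu{i_0}$ into $\psu{j_0}$ for a suitable cut. More precisely I want to show: the normal embeddings $f\in N(\sigma,\pi)$ biject with pairs consisting of (a) a choice of where the ``break'' between blocks mapped leftward versus rightward occurs, and (b) normal embeddings of the resulting pieces, in such a way that $d(f)$ adds up correctly — a maximal omitted leaf or follower-node of $\pi$ contributes to the defect of exactly one piece, and the leader/follower conditions are inherited from the corresponding conditions in the subtrees, with the one subtle point being the runs that straddle the root's children. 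The case $\pi_1=\one$ must be treated together with the maximal initial run of $\one$'s (matching the parameter $k$ of Proposition~\ref{pro-first}), since a leaf run at the top of $T(\pi)$ forces the leader/follower bookkeeping for those initial singleton blocks; similarly when $\pi_1>\one$ the maximal run of copies of $\pi_1$ (the parameter $k$ of Proposition~\ref{pro-second}) governs how a block $\pi_1$ may be entered by $\spr i$ via a normal embedding of $\spr i$ into $\pi_1$ and then the tail continues into some suffix $\psu j$ — which is exactly the double sum $\sum_i\sum_j$ in~\eqref{eq-second}.

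The heart of the argument is then a weight-preserving identification: I would show that $\sum_{f\in N(\sigma,\pi)}\sgn(f)$ decomposes, via this bijection, into precisely the sum on the right-hand side of the applicable recurrence, with $\mu(\spr i,\pi_1)$ and $\mu(\ssu i,\psu j)$ (resp.\ the $\mu(\ssu{k-1},\psu k)$, $\mu(\ssu k,\psu k)$ terms) replaced by their normal-embedding sums, to which the inductive hypothesis applies since $\pi_1$, $\psu j$, $\psu k$ are all shorter than $\pi$. The main obstacle I anticipate is exactly the careful handling of runs: a run of twins among the children of the root of $T(\pi)$ does not simply ``factor'' block-by-block, because the normality condition distinguishes leader from follower, and an embedding may omit several consecutive blocks in such a run. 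I expect to need a subsidiary lemma — analogous in spirit to Lemma~\ref{lem-tight2}, which already packages $w(\tch{\alpha}{\beta}{\rho})$ as $\sum_i\mu(\alpha,\suf{\beta}{i})$ — saying that the normal embeddings whose ``entry point'' lies in a run of length $k$ of a repeated block $\pi_1$ are counted, with signs, by $\sum_{j=1}^k$ of the corresponding suffix contributions, so that the combinatorial count lines up term-for-term with~\eqref{eq-second}. Once that run-bookkeeping lemma is in place, the rest is a routine matching of terms, and the theorem follows by induction.
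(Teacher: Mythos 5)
Your proposal follows essentially the same route as the paper's proof: induction on $|\pi|$, reduction to the decomposable case by reversal symmetry, and verification that the normal-embedding sum satisfies the recurrences of Propositions~\ref{pro-first} and~\ref{pro-second} by decomposing each normal embedding according to which prefix $\spr i$ lands in $\pi_1$ and which initial children of the root are omitted (your anticipated ``run-bookkeeping lemma'' is exactly the paper's bookkeeping with the parameters $I(f)$ and $J(f)$ and the sign-preserving bijection $N_{i,j}\cong N(\spr i,\pi_1)\times N(\ssu i,\psu j)$). The plan is sound and matches the published argument; it only remains to write out the bijections and the additivity of the defect, which the paper does in a few lines.
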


Consider, as an example, the two permutations $\pi$ and $\sigma$ depicted on
Figure~\ref{fig-qp}. The children of the root of $T(\pi)$ are partitioned into
three runs, where the first run has three internal nodes, the second run has a
single leaf, and the last run has a single internal node. Accordingly, there are
five normal embeddings of $\sigma$ into $\pi$, depicted in
Figure~\ref{fig-good}. Of these five normal embeddings, two have sign -1 and
three have sign 1, giving $\mu(\sigma,\pi)=1$.

\begin{figure}[htbp]
\hfil \includegraphics{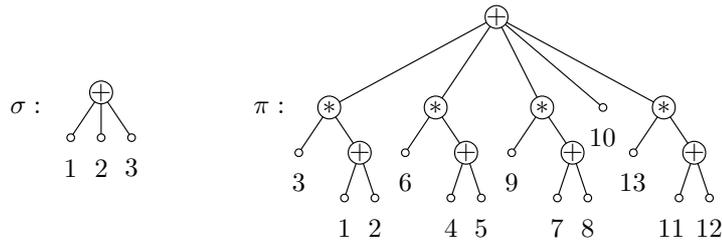}
 \caption{The separating trees of two permutations $\sigma$ and 
 $\pi$}\label{fig-qp}
\end{figure}

\begin{figure}[htbp]
\hfil\includegraphics{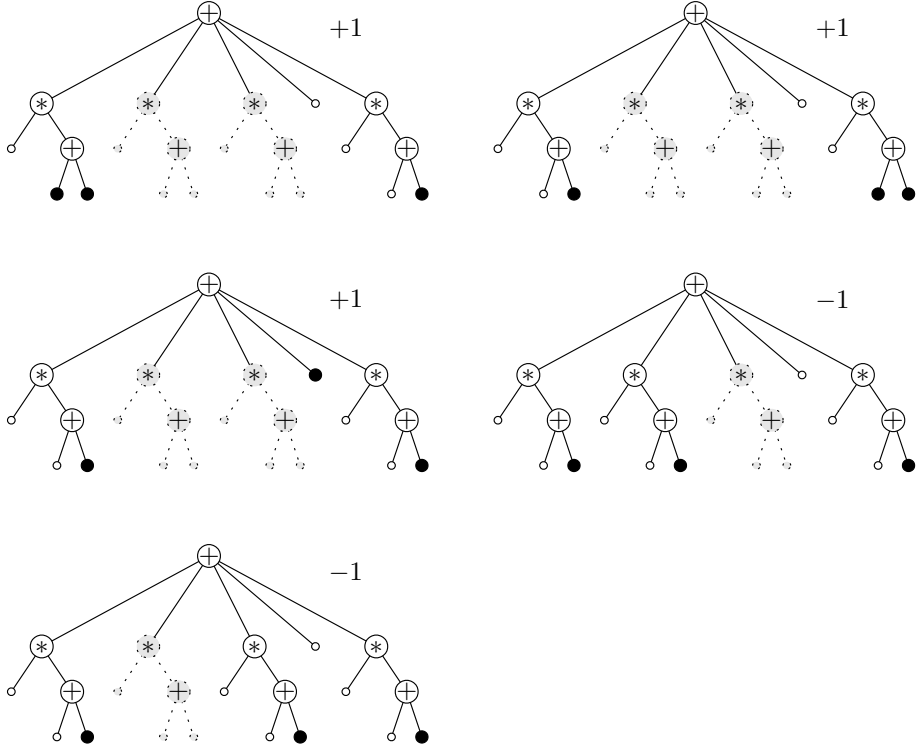}
\caption{The normal embeddings of $\sigma$ in $\pi$ (see
  Figure~\ref{fig-qp}), together with their signs.  The leaves covered
  by the embedding are represented by black disks, the leaves that are
  maximal omitted are represented by empty circles. Dotted lines
  represent subtrees rooted at a maximal omitted internal node. Note
  that the leaves of such subtrees do not contribute to the sign of
  the embedding.}\label{fig-good}
\end{figure}

\begin{proof}[Proof of Theorem~\ref{thm-main}]
Let $\bmu(\sigma,\pi)$ denote the value of $\sum_{f\in N(\sigma,\pi)}
\sgn(f)$. Our goal is to prove that $\bmu(\sigma,\pi)$ is equal to
$\mu(\sigma,\pi)$. We proceed by induction on $|\pi|$. For $\sigma=\pi$, we
clearly have $\bmu(\sigma,\pi)=\mu(\sigma,\pi)=1$, and if $\pi$ does not contain
$\sigma$, then $\bmu(\sigma,\pi)=\mu(\sigma,\pi)=0$. 

Suppose now that $\sigma<\pi$. Since $\pi$ is separable, it is
decomposable or skew-decomposable.  Assume, without loss of
generality, that $\pi$ is decomposable. Let $\pi_1+\dotsb+\pi_n$ be
its decomposition. Since the values of $\mu(\sigma,\pi)$ are uniquely
determined by the recurrences of Proposition~\ref{pro-first}
and~\ref{pro-second}, it is enough to show that $\bmu$ satisfies the
same recurrences.

Consider first the case when $\pi_1=\one$, which is treated by
Proposition~\ref{pro-first}. Let $\sigma_1+\dotsb+\sigma_m$ be the
decomposition of $\sigma$, let $k=\deg(\pi)$ and let $\ell=\deg(\sigma)$. This
means that the leftmost $k$ leaves of $T(\pi)$ are all children of the root
node, and they form a leaf run. Therefore, in any normal embedding, all the
$k-1$ leaves representing $\pi_2,\dotsc,\pi_k$ are covered, because they are
followers of~$\pi_1$. Necessarily, any element of $\sigma$ that is embedded to
one of the first $k$ elements of $\pi$ must be one of the first $\ell$
elements of~$\sigma$. Consequently, if $k-1>\ell$, there is no normal embedding
of $\sigma$ into $\pi$, and $\bmu(\sigma,\pi)=0$.

Suppose now that $k-1=\ell$. Then, in any normal embedding $f\in
N(\sigma,\pi)$, the element $\pi_1$ is omitted, the elements representing
$\sigma_1,\dotsc,\sigma_{k-1}$ are embedded on $\pi_2,\dotsc,\pi_k$, and the
elements of $\ssu{k-1}$ are embedded to the elements $\psu k$. The restriction
of $f$ to $\ssu{k-1}$ is a normal embedding $f'$ from the set $N(\ssu{k-1},\psu
k)$, and conversely, a normal embedding $f'$ from $N(\ssu{k-1},\psu k)$ can be
uniquely extended into an embedding $f\in N(\sigma,\pi)$. We then have
$d(f)=1+d(f')$, because $\pi_1$ is the only maximal omitted leaf of $f$ that is
not a maximal omitted leaf
of~$f'$. This shows that $\bmu(\sigma, \pi)=-\bmu(\ssu{k-1},\psu k)$.

Assume now that $k-1<\ell$. Let $N^+(\sigma,\pi)$ denote the set of normal
embeddings of $\sigma$ into $\pi$ that cover the element $\pi_1$, and let
$N^-(\sigma,\pi)$ be the set of those that omit~$\pi_1$. By the same argument
as in the previous paragraph, we see that $N^+(\sigma,\pi)$ is mapped by a
sign-preserving bijection to $N(\ssu k, \psu k)$, and $N^-(\sigma,\pi)$ is
mapped by a sign-reversing bijection to $N(\ssu{k-1},\psu k)$. Consequently,
$\bmu(\sigma,\pi)=\bmu(\ssu k, \psu k)-\bmu(\ssu{k-1},\psu k)$. 

These arguments show that $\bmu$ satisfies the recurrences of
Proposition~\ref{pro-first}.

Assume now that $\pi_1>\one$, which corresponds to the situation of
Proposition~\ref{pro-second}. Let $\pi_1+\dotsb+\pi_n$ be the
decomposition of $\pi$, let $\sigma_1+\dotsb+\sigma_m$ be the decomposition of
$\sigma$, and let $k\in[n]$ be the largest integer such that
$\pi_1=\dotsb=\pi_k$.
The $n$ blocks of $\pi$ correspond precisely to $n$ children of the root of the
tree $T(\pi)$, and the leftmost $k$ blocks form a non-leaf run. Therefore, each
normal embedding $f\in N(\sigma,\pi)$ must cover the leftmost child of the
root, which represents $\pi_1$, but it may omit some of its followers, which
represent the blocks $\pi_2,\dotsc,\pi_k$. Note that the symbols of $\sigma$
that are embedded into $\pi_1$ by $f$ must form a prefix of the form $\spr i$,
for some $i\in[m]$. 

For $f\in N(\sigma,\pi)$, let $I(f)\in[m]$ be the largest number $i$ such that
all the symbols of $\spr i$ are embedded into $\pi_1$, and let $J(f)\in[k]$ be
the largest number $j$ such that among the leftmost $j$ children of the
root of $T(\pi)$, only the node representing $\pi_1$ is covered. Let $N_{i,j}$
be the set $\{f\in N(\sigma, \pi)\colon I(f)=i,\, J(f)=j\}$. Notice that an
embedding $f\in N_{i,j}$ decomposes in an obvious way into a normal embedding
$f_1\in N(\spr i,\pi_1)$ and a normal embedding $f_2\in N(\ssu i, \psu j)$, and
that we have $d(f)=d(f_1)+d(f_2)$, and hence $\sgn(f)=\sgn(f_1)\sgn(f_2)$. This
decomposition is a bijection between $N_{i,j}$ and $N(\spr i,\pi_1)\times
N(\ssu i, \psu j)$. Consequently, we have the identity
\[
\sum_{f\in N_{i,j}} \sgn(f)=\sum_{f_1\in N(\spr i,\pi_1)}\sum_{f_2\in N(\ssu i,
\psu j)} \sgn(f_1)\sgn(f_2)=\bmu(\spr i, \pi_1)\bmu(\ssu i, \psu j).
\]
Summing this identity for each $i\in[m]$ and each $j\in[k]$, we conclude that 
\[
 \bmu(\sigma,\pi)=\sum_{i=1}^m\sum_{j=1}^k \bmu(\spr i, \pi_1)\bmu(\ssu i, \psu
j),
\]
which is the recurrence of Proposition~\ref{pro-second}. Therefore,
$\bmu(\sigma, \pi)=\mu(\sigma,\pi)$.
\end{proof}

Let us now state several consequences of Theorem~\ref{thm-main}.

\begin{corollary} If $\pi$ is separable, then $\mu(\one,\pi)\in\{0,1,-1\}$.  
\end{corollary}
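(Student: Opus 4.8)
The plan is to invoke Theorem~\ref{thm-main}, which gives $\mu(\one,\pi)=\sum_{f\in N(\one,\pi)}\sgn(f)$, and then to prove the purely combinatorial fact that $\pi$ admits \emph{at most one} normal embedding of $\one$. Since $\sgn(f)=\pm1$ for every $f$, this at once yields $\mu(\one,\pi)\in\{-1,0,1\}$. (The case $\pi=\one$ is immediate, so assume $\pi>\one$.)

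An embedding of $\one$ into $\pi$ is nothing but a choice of a single leaf $\ell_0$ of $T(\pi)$, namely the leaf whose letter lies in the image, so I must determine for which leaves $\ell_0$ the resulting embedding is normal. Let $r=v_0,v_1,\dots,v_t=\ell_0$ be the path from the root of $T(\pi)$ to $\ell_0$; the nodes $v_0,\dots,v_{t-1}$ are internal, and for each $i\in\{1,\dots,t\}$ every child of $v_{i-1}$ other than $v_i$ is omitted, and in fact maximal omitted, since its parent $v_{i-1}$ lies on the path and hence is not omitted. I would then unwind the two normality conditions at each $v_{i-1}$. The key elementary observation is that the leftmost internal child of any node is always the leader of its non-leaf run (everything to its left is a leaf, and a leaf is never a twin of an internal node); hence, if for some $i<t$ the node $v_i$ were \emph{not} the leftmost internal child of $v_{i-1}$, that leftmost internal child would be a maximal omitted leader, contradicting normality. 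So $v_i$ is forced to be the leftmost internal child of $v_{i-1}$; moreover all internal children of $v_{i-1}$ must then be mutual twins forming a single run (else some omitted internal node is again a leader), and the leaf children of $v_{i-1}$ must be pairwise nonadjacent (each the leader of its one-element leaf run). Finally, the same leader obstruction shows that $v_{t-1}$ has no internal children at all, so all its children are leaves and form a single leaf run; since each such leaf other than $\ell_0$ must be that run's leader, $v_{t-1}$ has exactly two children and $\ell_0$ is the right one.

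Consequently the path from $r$ is completely determined: one descends repeatedly to the leftmost internal child until reaching a node with no internal children, and a normal embedding exists precisely when this descent stays consistent with the twin/adjacency conditions above and terminates at a node with exactly two (leaf) children, in which case $\ell_0$ is the right one of those two. In every case there is at most one valid $\ell_0$, so $|N(\one,\pi)|\le1$ and the corollary follows.

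I expect the delicate point to be the analysis at $v_{t-1}$ together with ruling out branching: one must be certain that no normal embedding can follow a non-leftmost internal child, and that the terminal node cannot have three or more leaf children. Both hinge on the single observation that the leftmost internal child of a node is the leader of its non-leaf run, so that failing to route the path through it destroys normality; once this is established, the determinism of the descent, and hence the uniqueness of $\ell_0$, is routine.
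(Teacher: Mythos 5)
Your proof is correct and follows essentially the same route as the paper: both invoke Theorem~\ref{thm-main} and reduce the claim to showing $|N(\one,\pi)|\le 1$, using the leader/follower constraints on maximal omitted nodes to force the single covered leaf. The paper does this more briefly by exhibiting one leaf (a follower in its leaf run, all of whose ancestors are leaders of their non-leaf runs) that every normal embedding must cover, whereas you derive the same forced leaf by a deterministic descent through leftmost internal children; the content is the same.
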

\begin{proof} The permutation $\one$ can have at most one normal
  embedding into~$\pi$.  Namely, if $|\pi|>1$, then $T(\pi)$ has at
  least one leaf $\ell$ that is not a leader of its leaf run, but each
  of its ancestors is a leader of its non-leaf run. Such a leaf $\ell$
  must be covered by any normal embedding of any permutation
  into~$\pi$.
\end{proof}

The next corollary confirms a (more general version of a) conjecture of
Steingr\'\i msson and Tenner~\cite{ste-tenner}.
\begin{corollary} If $\pi$ and $\sigma$ are separable permutations, then
$|\mu(\sigma,\pi)|$ is at most the number of occurrences of $\sigma$ in~$\pi$.
\end{corollary}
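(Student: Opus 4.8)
The plan is to read this off directly from Theorem~\ref{thm-main} together with the triangle inequality. First I would invoke the theorem to write $\mu(\sigma,\pi)=\sum_{f\in N(\sigma,\pi)}\sgn(f)$, and then estimate
\[
|\mu(\sigma,\pi)|=\Bigl|\sum_{f\in N(\sigma,\pi)}\sgn(f)\Bigr|\le\sum_{f\in N(\sigma,\pi)}|\sgn(f)|=|N(\sigma,\pi)|,
\]
since $\sgn(f)=\pm1$ for every $f$. So it remains to bound $|N(\sigma,\pi)|$, the number of normal embeddings of $\sigma$ into $\pi$, by $\sigma(\pi)$, the number of occurrences of $\sigma$ in~$\pi$.

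The second step is the observation that $|N(\sigma,\pi)|\le\sigma(\pi)$ because every normal embedding is in particular an embedding, and embeddings of $\sigma$ into $\pi$ correspond bijectively to occurrences of $\sigma$ in~$\pi$. Indeed, an embedding $f\colon[m]\to[n]$ is monotone increasing, hence uniquely determined by its image; conversely, given any occurrence of $\sigma$ in $\pi$, that is, a set of positions $s_1<\dotsb<s_m$ with $\pi_{s_1}\pi_{s_2}\dotsb\pi_{s_m}$ order-isomorphic to $\sigma$, the map $i\mapsto s_i$ is an embedding. (One should note that the order-preserving condition in the definition of embedding, though stated only as an implication, forces $\pi_{f(1)}\dotsb\pi_{f(m)}$ to be order-isomorphic to $\sigma$, since $\sigma$ is a permutation.) Thus the number of embeddings of $\sigma$ into $\pi$ equals $\sigma(\pi)$, and restricting attention to the normal ones can only decrease the count.

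Combining the two displays gives $|\mu(\sigma,\pi)|\le|N(\sigma,\pi)|\le\sigma(\pi)$, which is the assertion. There is essentially no obstacle here: all the real content sits in Theorem~\ref{thm-main}, and the remaining ingredients — the triangle inequality and the identification of embeddings with occurrences — are immediate. The only point requiring a moment's care is the verification that each normal embedding genuinely yields a distinct occurrence, i.e. that two distinct embeddings have distinct images, which is forced by monotonicity.
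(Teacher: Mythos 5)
Your argument is correct and is exactly the paper's proof: apply Theorem~\ref{thm-main}, use the triangle inequality to bound $|\mu(\sigma,\pi)|$ by the number of normal embeddings, and observe that normal embeddings form a subset of all embeddings, which are in bijection with occurrences of $\sigma$ in $\pi$. The paper states this in one line; your added care about the embedding--occurrence bijection is fine but not a different route.
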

\begin{proof} This follows from the fact that the number of occurrences of
$\sigma$ 
in $\pi$ is clearly at least the number of normal embeddings of $\sigma$
into~$\pi$.
\end{proof}

Using Theorem \ref{thm-main}, it is easy to show that for
$\pi_n=214365\cdots (2n)(2n-1)$, we have $\mu(12,\pi_n)=n-1$.  Thus,
the following result.

\begin{corollary} 
  The value of the M\"obius function on intervals $[\sigma,\pi]$ is
  unbounded, even for separable permutations $\sigma$ and $\pi$.
\end{corollary}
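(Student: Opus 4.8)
The plan is to produce an explicit one-parameter family of intervals of separable permutations on which $\mu$ is unbounded. I would fix $\sigma=12$ and take $\pi_n=214365\cdots(2n)(2n-1)$, that is, the direct sum $\pi_n=\underbrace{21+21+\dotsb+21}_{n}$ of $n$ copies of $21=\one*\one$. Since $21$, and hence $\pi_n$, is obtained from $\one$ by sums and skew sums, and $12=\one+\one$ likewise, both $\sigma$ and $\pi_n$ are separable; so it is enough to show $\mu(12,\pi_n)=n-1$, which is unbounded. I would carry this out via Theorem~\ref{thm-main}, with the recurrences of Section~\ref{sec-main} as a cross-check.

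First I would describe the reduced tree $T(\pi_n)$: its root is a direct node with $n$ children $N_1,\dotsc,N_n$, and each $N_i$ is a skew node with two leaf children, the left one carrying the larger value $2i$ and the right one the smaller value $2i-1$. As the $n$ subtrees rooted at the $N_i$ are pairwise isomorphic, the children of the root form a single non-leaf run with leader $N_1$, and the two leaves below each $N_i$ form a leaf run of length $2$ whose leader is the left leaf.

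Next I would enumerate the normal embeddings of $12$ into $\pi_n$. Exactly two leaves are covered, and since each block $21$ is decreasing, the two covered leaves must lie in distinct blocks $N_a,N_b$ with $a<b$; any such pair realizes the pattern $12$, because every value in $N_b$ exceeds every value in $N_a$. Normality then pins everything down: if $N_1$ were omitted it would be a maximal omitted internal node that is the leader of its non-leaf run, which is forbidden, so $a=1$; in each of the two covered blocks the single omitted leaf is maximal omitted, hence must be the run leader, i.e.\ the left leaf, so the covered leaf in each is the right (smaller) one; and every other block $N_c$ with $2\le c\le n$, $c\neq b$, is omitted but is a follower of its non-leaf run, so it causes no violation, and its leaves, being inside an omitted internal node, are not maximal omitted. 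Thus for each $b\in\{2,\dotsc,n\}$ there is exactly one normal embedding, covering the right leaves of $N_1$ and $N_b$, giving $n-1$ normal embeddings in total; each has defect $2$ (the maximal omitted leaves being the left leaves of $N_1$ and $N_b$; maximal omitted internal nodes do not count toward the defect), so $\sgn(f)=(-1)^2=1$ for each. By Theorem~\ref{thm-main}, $\mu(12,\pi_n)=\sum_{f\in N(12,\pi_n)}\sgn(f)=n-1$.

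There is no serious obstacle; the only delicate point, and the one I would write out most carefully, is exactly the normality analysis above, in particular verifying that normality forces $a=1$ and forces the smaller leaf to be covered in each of the two active blocks, so that the count of normal embeddings is precisely $n-1$ (not larger) and all signs are $+1$. As a sanity check and alternative derivation I would also run the recurrences: with $\sigma=\sigma_1+\sigma_2$, $\sigma_1=\sigma_2=\one$, and $\pi_1=21$ containing no occurrence of $12$, the only surviving term of Proposition~\ref{pro-second} gives $\mu(12,\pi_n)=-\sum_{r=0}^{n-1}\mu(\one,r\times 21)$, and since $\mu(\one,r\times 21)=-1$ for $r\ge1$ (Corollary~\ref{coro-first-recurrence}) while $\mu(\one,\zero)=0$, this sum equals $n-1$, confirming the claim.
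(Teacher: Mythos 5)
Your proposal is correct and follows exactly the paper's route: the paper establishes this corollary by noting (via Theorem~\ref{thm-main}) that $\mu(12,\pi_n)=n-1$ for $\pi_n=214365\cdots(2n)(2n-1)$, which is precisely your family. You have simply written out in full the normal-embedding count that the paper leaves as "easy to show," and your details (forcing $N_1$ to be covered, forcing the right leaf in each active block, defect $2$ and sign $+1$ for each of the $n-1$ embeddings) are all accurate.
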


Recall that a permutation is \emph{layered} if it is the concatenation
of decreasing sequences, such that the letters in each sequence are
smaller than all letters in subsequent sequences. One example is the
permutation 21365487, whose layers are shown by 21--3--654--87. Sagan
and Vatter \cite{sagan-vatter} gave a formula for the M\"obius
function of intervals of layered permutations, and it is easy to see
that layered permutations are special cases of separable permutations.
Namely, a layered permutation is separable, and its separating tree
has depth 2 (except in the trivial cases of the increasing and
decreasing permutations), where the children of the root are the
layers of the permutation, and the grandchildren of the root are all
leaves.

\section{Concluding remarks, conjectures and open problems}\label{sec-last}

We have shown in Corollary~\ref{cor-algo} that $\mu(\sigma,\pi)$ can
be computed efficiently when $\sigma$ and $\pi$ are separable. The
same argument does in fact apply in a more general form: For any
hereditary class $\clc$ of permutations that is a closure of a finite
set of permutations, there is a polynomial-time algorithm to compute
$\mu(\sigma,\pi)$ for a given $\sigma$ and $\pi$ in~$\clc$. We do not
know whether such an algorithm also exists for more general classes of
permutations.

Bose, Buss and Lubiw~\cite{bbl} have shown that it is NP-hard for
given permutations $\pi$ and $\sigma$ to decide whether $\pi$ contains
$\sigma$. In view of this, it seems unlikely that $\mu(\sigma,\pi)$ could be
computed efficiently for general permutations $\sigma$ and~$\pi$.

Our results imply that for a separable permutation $\pi$, the M\"obius
function $\mu(\one,\pi)$ has absolute value at most 1. In fact, the class
of separable permutations is the unique largest hereditary class with this
property, since any hereditary class not contained in the class of
separable permutations must contain 2413 or 3142, and
$\mu(\one,2413)=\mu(\one,3142)=-3$. It is natural to consider $\mu(\one,\pi)$
as a function of $\pi$, and ask whether this function is bounded on a given
class of permutations. By Corollary~\ref{cor-bounded}, if a hereditary class $\clc$ is a
closure of a finite set of permutations, then $\mu(\one,\pi)$ is bounded on~$\clc$.
We do not know if there is another example of a permutation class on which this
function is bounded. 

On the other hand, we do not have a proof that $\mu(\one,\pi)$ is
unbounded on the set of all permutations, although numerical evidence
suggests that this is the case. According to our computations, the
sequence of maximum values of $|\mu(\one,\pi)|$ for $\pi\in\clsn$,
starting at $n=1$, begins $1, -1, 1, -3, 6, -11, 15, -27, -50, -58,
143, \ldots$.  For these cases ($n\le11)$, there is, up to trivial
symmetries, a unique permutation for which the M\"obius function
attains this value.  These permutations are
\begin{eqnarray*}
&&  1,\;\;12,\;\;132,\;\;2413,\;\;24153,\;\;351624,\;\;2461735,\;\;35172846,\\ &&
  472951836,\;\; 4\,6\,8\,1\,9\,2\,10\,3\,5\,7,\;\;
  3\,6\,1\,9\,4\,11\,7\,2\,10\,5\,8
\end{eqnarray*}
All of the above permutations are \emph{simple} (except for 132, but
there are no simple permutations of length 3).  A permutation is
simple if it has no segment $a_ia_{i+1}\ldots a_{i+k}$ where $1\le
k<n-1$ and $\{a_i,a_{i+1},\ldots,a_{i+k}\}$ is a set of consecutive
integers (see \cite{albert-atkinson}).  Thus, in some (imprecise)
sense, simple permutations are the opposite of (skew) decomposable
permutations (and, in particular, separable permutations).  In
particular, a simple permutation can neither be decomposed nor skew
decomposed.  We are not able to compute $\mu(1,\pi)$ for all
permutations $\pi$ of length 12, but for simple permutations $\pi$ the
maximum value of $\mu(1,\pi)$ is $-261$, for
$\pi=4\;7\;2\;10\;5\;1\;12\;8\;3\;11\;6\;9$.

In light of Corollary \ref{coro-not-high}, to show that
$|\mu(\one,\pi)|$ is unbounded, it would suffice to show that the
maximum of $|\mu(\one,\pi)|$ for permutations $\pi$ of length $n$, for
any $n$, is attained only by a permutation $\pi$ that does not start
with 1.  In that case $|\mu(\one,\one+\pi)| = |\mu(\one,\pi)|$, so
there would be a permutation $\tau$ of length $n+1$ for which
$|\mu(\one,\tau)|> |\mu(\one,\one+\pi)| = |\mu(\one,\pi)|$.

\begin{question}
  For which permutation classes $\clc$ is the function $\mu(\one,\pi)$
  bounded on $\clc$? Is $\mu(\one,\pi)$ unbounded on the set of all
  permutations? Can non-trivial upper or lower bounds be found for
  $max_{\pi\in\clsn}|\mu(\one,\pi)|$, as a function of~$n$?
\end{question}

We have exhibited several classes of intervals whose M\"obius function
is zero (and more were presented in \cite{ste-tenner}).  Can the
following question be answered precisely?

\begin{question}
  When is $\mu(\sigma,\pi)=0$?
\end{question}

For separable permutation $\pi$, we have shown that $|\mu(\sigma,\pi)|$ is
at most $\sigma(\pi)$, that is, the number of occurrences of $\sigma$ in~$\pi$.
This is not true for non-separable $\pi$, even when $\sigma=\one$, as shown
above. However, it might be possible to bound $|\mu(\sigma,\pi)|$ as a function
of $\sigma(\pi)$.
\begin{question}
Is there an upper bound for $|\mu(\sigma,\pi)|$ that only depends
on~$\sigma(\pi)$?
\end{question}

The following conjecture has been verified for $n\le10$.

\begin{conjecture}
The maximum value of the M\"obius function $\mu(\sigma,\pi)$ for
separable permutations $\sigma$ and $\pi$, where $\pi$ has length $n\ge3$,
is given by 
$$
\max_k\binom{n-1-k}{k}.
$$ This maximum is attained by the permutation $\pi$ that starts with
its odd letters in increasing order, followed by the even letters in
decreasing order, and $\sigma$ of the same form and length
$2\cdot\lfloor(n+1/2)\rfloor$ if the length of $\pi$ is $2n$, and
$2\cdot\lfloor(n+1/2)\rfloor-1$ if the length of $\pi$ is $2n-1$.
\end{conjecture}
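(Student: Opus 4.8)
\emph{Towards a proof.} Write $M(n)=\max_k\binom{n-1-k}{k}$ for the largest entry on the pertinent anti-diagonal of Pascal's triangle. The plan is to prove the upper bound $|\mu(\sigma,\pi)|\le M(n)$ for all separable $\sigma$ and all separable $\pi$ of length $n$, and separately to exhibit, for the ``zigzag'' permutation $\pi$ of the statement and a zigzag $\sigma$ of the prescribed length, equality $\mu(\sigma,\pi)=M(n)$. Both halves rely on Theorem~\ref{thm-main} together with the recurrences of Propositions~\ref{pro-first} and~\ref{pro-second}.

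\emph{Attainment.} I would begin here, since the reduced tree $T(\pi)$ of the zigzag permutation is very rigid: it is a caterpillar whose internal nodes alternate between direct and skew type, each having one leaf child besides the branch continuing downward, with a two-element leaf run at the bottom. In such a tree every internal node is the leader of a one-element non-leaf run, hence can never be maximal omitted; and an omitted internal node would force its parent, and so on up to the root, to be omitted, which happens only for $\sigma=\zero$. Therefore in any normal embedding of a non-empty $\sigma$ no internal node is omitted, so every internal node retains a covered leaf, the follower of the bottom leaf run is forced to be covered, and each of the remaining (leader) leaves may be omitted freely, each omitted leaf then being maximal omitted. Consequently every normal embedding of a fixed $\sigma$ has defect $n-|\sigma|$ and hence the same sign, and $\mu(\sigma,\pi)=(-1)^{n-|\sigma|}\,c$, where $c$ is the number of occurrences of $\sigma$ in $\pi$ that cover the fixed bottom-follower position. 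It remains to maximise $c$ over $\sigma$: the marked position splits the other $n-1$ positions of $\pi$ into an increasing block on the left and a decreasing block on the right, and counting length-$\ell$ patterns through the mark reduces to counting monotone lattice paths, giving $\binom{n-1-k}{k}$ when $\ell=n-2k$; taking the optimal $k$ yields $M(n)$ and fixes $|\sigma|$. (For odd $n$ the marked value fails to be the global maximum, which is where the even- and odd-length formulas for $|\sigma|$ separate.)

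\emph{Upper bound.} Here I would induct on $n$ and feed back into Propositions~\ref{pro-first} and~\ref{pro-second}. In every term $\mu(\sigma',\pi')$ on the right, $\pi'$ is a proper block $\pi_1$, a proper prefix $\ppr i$, or a proper suffix $\psu j$ of $\pi$, hence strictly shorter, so the inductive hypothesis bounds it. By Corollaries~\ref{coro-first-recurrence} and~\ref{coro-not-high}, prepending a single $\one$ merely changes a sign, so growth can arise only from an initial block repeated $k\ge 2$ times, in which case $\pi=k\times\pi_1+\psu k$ with $\pi_1>\one$ and $\mu(\sigma,\pi)=\sum_{i=1}^m\mu(\spr i,\pi_1)\sum_{j=1}^k\mu(\ssu i,\psu j)$. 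The crux is to show that, subject to the length budget $k\,|\pi_1|+|\psu k|\le n$, no choice of $\pi_1$, $k$, and tail pushes the absolute value of this double sum past $M(n)$, with equality forcing the zigzag shape; the binomial coefficients enter because lengthening a repeated block by one letter versus adjoining one more copy realise the two terms of the Pascal identity $\binom{n-1-k}{k}=\binom{n-2-k}{k}+\binom{n-2-k}{k-1}$. The natural framework is to pass to the reduced tree: set $a(T)=\max_\sigma|\mu(\sigma,\pi_T)|$, translate the two recurrences into a recursion for $a$ over the runs of children of the root of $T$, and solve the resulting extremal problem over reduced trees with $n$ leaves, showing the alternating caterpillar is the unique maximiser.

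\emph{Main obstacle.} The difficult step is precisely this optimization. Since Proposition~\ref{pro-second} combines \emph{products} $\mu(\spr i,\pi_1)\,\mu(\ssu i,\psu j)$ with a \emph{sum} over the indices, a term-by-term comparison against $M(\cdot)$ is too wasteful; one needs a genuinely global estimate --- a convexity or rearrangement bound on the double sum, or a potential/charging argument run along the tree recursion --- to exclude the possibility that deep nesting combined with several distinct repeated blocks beats the single-block caterpillar. A secondary, purely technical irritation is keeping track of the argmax $k^{*}(n)$, which drifts roughly like $n/4$ and increases by one periodically, so that the two closed forms for the length of the extremal $\sigma$ come out exactly right.
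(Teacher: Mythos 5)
This statement is labelled a \emph{conjecture} in the paper, verified there only computationally for $n\le 10$; the paper offers no proof, so there is nothing to compare your argument against. Judged on its own, your write-up is a plan rather than a proof, and you say as much: the entire upper-bound half --- showing that no separable $\pi$ of length $n$ and no $\sigma$ can give $|\mu(\sigma,\pi)|>\max_k\binom{n-1-k}{k}$ --- rests on an unproved global optimization over reduced trees. That is precisely the hard content of the conjecture. The induction you set up does not close: Proposition~\ref{pro-second} expresses $\mu(\sigma,\pi)$ as a double sum of \emph{products} $\mu(\spr i,\pi_1)\mu(\ssu i,\psu j)$, and bounding each factor by $M(|\pi_1|)$ and $M(|\psu j|)$ and summing over $i$ and $j$ gives something far larger than $M(n)$; you correctly identify that a ``convexity or rearrangement bound'' is needed but do not supply one, and it is not at all clear that one exists in the generality required (the terms can have mixed signs, and the number of summands $m$ depends on $\sigma$, not on $\pi$). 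Until that step is carried out, the statement remains open.

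The attainment half is more substantial and essentially sound in outline: for the alternating ``zigzag'' permutation the reduced tree is indeed a caterpillar with alternating node types, singleton non-leaf runs force every internal node to be covered (for nonempty $\sigma$), so all normal embeddings of a fixed $\sigma$ have the same defect $n-|\sigma|$ and the same sign, and Theorem~\ref{thm-main} reduces $\mu(\sigma,\pi)$ up to sign to a count of normal embeddings. But even here you still owe a verification that, for the zigzag $\sigma$ of the prescribed length, this count equals $\binom{n-1-k}{k}$ (your lattice-path reduction is asserted, not proved, and the bookkeeping around the forced bottom follower and the parity of $n$ is exactly where the two closed forms for $|\sigma|$ in the statement come from). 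Establishing attainment alone would only show that the conjectured value is a \emph{lower} bound for the maximum, which is the easy direction. So: a genuine gap, namely the entire upper bound, plus unfinished details in the counting argument.
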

As an example, $\mu(13542,135798642)=15=\binom{9-1-2}{2}$.

\def\dsp{\ensuremath{\Delta(\sigma,\pi)}}

Finally, we mention some questions about the topology of the order
complexes of intervals in the poset $\clp$.  (For definitions, see
\cite{ECI}). Given an interval $[\sigma,\pi]$, let $\dsp$ be the order
complex of the poset obtained from $[\sigma,\pi]$ by removing $\sigma$
and~$\pi$.  

\begin{question}\label{q-topo}
  \begin{enumerate}
  \item For which $\sigma$ and $\pi$ does $\dsp$ have the homotopy
    type of a wedge of spheres?
  \item\label{q-2} Let $\Gamma$ be the subcomplex of $\dsp$ induced by those
    elements $\tau$ of $[\sigma,\pi]$ for which $\mu(\sigma,\tau)=0$.
    Is $\Gamma$ a pure complex?  
  \item \label{q-3} If $\sigma$ occurs precisely once in $\pi$, and
    $\mu(\sigma,\pi)=\pm1$, is $\dsp$ homotopy equivalent to a sphere?
\item For which $\sigma$ and $\pi$ is $\dsp$ shellable?
  \end{enumerate}
\end{question}

We should mention that for $\sigma=231$ and $\pi=231564$, the order
complex $\dsp$ is not shellable; it consists of two connected
components, each of which is contractible.  However, removing from
$[231,231564]$ all elements $\tau$ with $\mu(231,\tau)=0$, we obtain a
shellable complex, namely a four-element boolean algebra.  For parts
(\ref{q-2}) and (\ref{q-3}) in Question \ref{q-topo}, we know no
counterexamples.  Since we have so far only examined intervals of low
rank, our evidence is not strong.

\end{document}